\newtheorem{theorem}{Theorem}[section]
\newtheorem{lemma}[theorem]{Lemma}
\newtheorem{corollary}[theorem]{Corollary}
\newtheorem{proposition}[theorem]{Proposition}
\newtheorem{question}[theorem]{Question}
\newtheorem{problem}[theorem]{Problem}
\newtheorem{example}[theorem]{Example}
\theoremstyle{definition}
\newtheorem{definition}[theorem]{Definition}
\newcommand{\Aut}{\hbox{{\rm Aut}}}
\newcommand{\ZZ}{\mathbb{Z}}
\begin{document}

\title{Strong cliques in vertex-transitive graphs}

\author[1,2]{Ademir Hujdurovi\'c\thanks{ademir.hujdurovic@upr.si}}
\affil[1]{\normalsize University of Primorska, UP IAM, Muzejski trg 2, SI-6000 Koper, Slovenia}
\affil[2]{\normalsize University of Primorska, UP FAMNIT, Glagolja\v ska 8, SI-6000 Koper, Slovenia}

\date{\today}

\maketitle
\begin{abstract}
A clique (resp., independent set) in a graph is strong if it intersects every maximal independent sets (resp., every maximal cliques). A graph is CIS if all of its maximal cliques are strong and localizable if it admits a partition of its vertex set into strong cliques. In this paper we prove that a clique $C$ in a vertex-transitive graph $\Gamma$ is strong if and only if $|C||I|=|V(\Gamma)|$ for every maximal independent set $I$ of $\Gamma$. Based on this result we prove that a vertex-transitive graph is CIS if and only if it admits a strong clique and a strong independent set. 
We classify all vertex-transitive graphs of valency at most 4 admitting a strong clique, and give a partial characterization of $5$-valent vertex-transitive graphs admitting a strong clique. Our results imply that every vertex-transitive graph of valency at most $5$ that admits a strong clique is localizable.
We answer an open question by providing an example of a vertex-transitive CIS graph which is not localizable. 
\end{abstract}

\noindent
{\bf Keywords:} strong clique, vertex-transitive graph, CIS graph, localizable graph, maximum clique

\maketitle

\section{Introduction}

A {\em clique} (resp., {\em independent set}) in a graph is a set of pairwise adjacent (resp., pairwise non-adjacent) vertices.
A clique (resp.,~independent set) in a graph is said to be {\em maximal} if it is not contained in any larger clique (resp., independent set),
and it is said to be {\em maximum}, if it is of the maximal size.
{\em Clique} number of a graph $\Gamma$, denoted by $\omega(\Gamma)$ is the size of a maximum clique, and {\em independence number}, denoted by $\alpha(\Gamma)$ is the size of a maximum independent set.
A { clique} (resp., { independent set}) is said to be {\em strong} if it intersects every maximal independent set (resp.,~every maximal clique).
It is co-NP-complete to test whether a given clique in a graph is strong~\cite{MR1344757} and NP-hard to test whether a given graph contains a strong clique~\cite{MR1301855}. 

The notions of strong cliques and strong independent sets in graphs played an
important role in the study of several graph classes. For example, {\it strongly perfect graphs} \cite{MR715895,MR778749} are graphs in which each of its induced subgraphs has a strong independent set, and {\it very strongly perfect graphs} (also called {\it Meyniel graphs}) \cite{MR888682,MR0439682,MR778765} 
are graphs such that in each of its induced subgraphs each vertex belongs to a strong independent set. Another class of graphs defined using strong cliques are {\em localizable graphs}, defined as graphs whose vertex-set can be partitioned into strong cliques. They were introduced by Yamashita and Kameda in 1999, in the concluding remarks of their paper~\cite{MR1715546}, and were studied further in \cite{HujdurovicMR2018}. Localizable graphs form a subclass of  {\em well-covered} graphs~\cite{MR1254158,MR1677797}, which are defined as graphs in which all maximal independent sets have the same size.
A graph is said to be {\it CIS} if its every maximal clique is strong, or, equivalently, if its every maximal
independent set is strong, see \cite{Boros2015,MR3141630,MR2496915,MR2489416,MR2755907}.

The study of strong cliques in the context of vertex-transitive graphs was initiated in \cite{MR3278773}, where it is proved that a vertex-transitive graph $\Gamma$ is CIS if and only if it is well-covered, its complement is well covered and $\alpha(\Gamma)\omega(\Gamma)=|V(\Gamma)|$, generalizing analogous result about CIS circulants proved in \cite{MR3141630}. In addition, classification of vertex-transitive CIS graphs with clique number at most $3$ (see \cite[Theorem 4.3]{MR3278773}) and classification of vertex-transitive CIS graphs of valency at most $7$ (see \cite[Corollary 5.6]{MR3278773}) was given.

In this paper we continue with the study of strong cliques in vertex-transitive graphs initiated in \cite{MR3278773}. We 
prove that a clique $C$ in a vertex-transitive graph $\Gamma$ is strong if and  only if $|C||I|=|V(\Gamma)|$ for every maximal independent set $I$ of $\Gamma$
(see Theorem~\ref{thm:strongclique}), and based on this result we prove that a vertex-transitive graph is CIS if and only if it admits a strong clique and a strong independent set (see Proposition~\ref{prop:CIS}). We also observe that existence of a strong clique in a vertex-transitive graph implies that the graph is well-covered,  and that every strong clique in a vertex-transitive graph must be maximum (see Corollary~\ref{cor:strongclique}). Furthermore, in Section~\ref{sec:CIS non localizable} we give a negative answer to \cite[Question 6.2]{MR3278773} by constructing a vertex-transitive CIS graph which is not localizable.  

In Section~\ref{sec:Small valency} we investigate the existence of strong cliques in vertex-transitive graphs of small valency. We prove that there are only 3 cubic vertex-transitive graphs admitting a strong clique (see Theorem~\ref{thm:VT-3valent}), and classify all $4$-valent vertex-transitive graphs admitting a strong clique (see Theorem~\ref{thm:4valent-main}). For the $5$-valent case, we give a partial characterization. Namely, we prove that there is a unique $5$-valent vertex-transitive graph admitting a strong clique with a given clique number different from $4$ (see Proposition~\ref{prop:5 valent omega=2,3,5,6}). For the $5$-valent vertex-transitive graphs with clique number $4$ admitting a strong clique, we classify them in Proposition~\ref{prop:5-valent local graph not L1}, except in the case when the subgraph induced by the neighbours of a given vertex consist of a disjoint union of $K_3$ and two isolated vertices, in which case we provide $4$-infinite families of examples (see Example~\ref{ex:local graph L1}). Our results imply that every vertex-transitive graph of valency at most $5$ admitting a strong clique is localizable. We conclude the paper by posing several open problems in Section~\ref{sec:problems}.


\section{Preliminaries}

All graphs and groups considered in this paper are finite.
Let $\Gamma= (V, E)$ be a graph. We call $V$ the
vertex set of $\Gamma$ and write $V = V(\Gamma )$. Similarly, we call $E$ the edge set of $\Gamma$ and write $E = E(\Gamma)$.
The {\em complement}  $\overline{\Gamma}$ of $\Gamma$ is the graph with the same vertex set and the complementary edge set.
For a vertex $v \in V(\Gamma)$, let $\Gamma(v)$ denote the neighborhood of $v$, that is, the set of vertices of $\Gamma$ that are
adjacent to $v$. If $v$ is a non-isolated vertex of
$\Gamma$ then the {\em local graph} of $\Gamma$ at $v$ is the subgraph of $\Gamma$ induced by $\Gamma(v)$. We sometimes denote the local graph of $\Gamma$ at $v$ with $\Gamma(v)$. Whether with $\Gamma(v)$ we mean the set of neighbours of $v$ in $\Gamma$ or the local graph at $v$ should always be clear from the context. With $C_n$ we denote the cycle of length $n$ and with $K_n$ we denote the complete graph of order $n$.
An {\it automorphism} of a graph $\Gamma=(V,E)$ is a bijective mapping $\varphi: V\to V$ such that
$$
(\forall  u,v\in V)\quad \{u,v\}\in E\Leftrightarrow \{\varphi(u),\varphi(v)\}\in E.
$$
The set of all automorphisms of a graph $\Gamma$ is denoted with $Aut(\Gamma)$, and is called the {\it automorphism group} of $\Gamma$.
A graph $\Gamma$ is said to be {\it vertex-transitive} if for any two vertices $u$ and $v$ of $\Gamma$ there exists an automorphism $\varphi$ of $\Gamma$ such that $\varphi(u)=v$. An important subclass of the family of vertex-transitive graphs is formed by {\em Cayley graphs}, defined as follows. For a group $G$ and an inverse closed subset $S\subseteq G\setminus \{1_G\}$, Cayley graph of $G$ with respect to the connection set $S$, denoted by $Cay(G,S)$, is the graph with vertex set $G$, and two vertices $x,y\in G$ being adjacent if and only if $x^{-1}y\in S$.

A graph is said to be {\em reducible} if it has two distinct vertices with the same neighbourhoods, and {\em irreducible} otherwise.
The {\it lexicographic product} of graphs $\Gamma_1$ and $\Gamma_2$ is the graph $\Gamma_1[\Gamma_2]$ with vertex set $V(\Gamma_1)\times V(\Gamma_2)$, where two vertices $(u,x)$ and $(v,y)$ are adjacent if and only if either $\{u,v\}\in E(\Gamma_1)$ or $u = v$ and $\{x,y\}\in E(\Gamma_2)$.
A vertex-transitive graph $\Gamma$ which is reducible is isomorphic to the lexicographic product $\Gamma'[nK_1]$, where $\Gamma'$ is irreducible vertex-transitive graph (see \cite[Proposition 5.5]{MR3278773}).

Since automorphisms preserve adjacency relations, it is easy to see that any  automorphism of a graph maps cliques, maximal cliques and strong cliques into cliques, maximal cliques and strong cliques, respectively.

Let $\Gamma$ be a graph and  $A,B\subseteq V(\Gamma)$. We say that $A$ {\em dominates} $B$ if every vertex of $B$ has a neighbour in $A$.
The following lemma gives a simple characterization of strong cliques based on domination. 
\begin{lemma}\cite[Lemma 2.1]{HMR-new}\label{lem:dominates}
Let $\Gamma$ be a graph and $C$ a clique in $\Gamma$. Then $C$ is a strong clique if and only if no independent set disjoint with $C$ dominates $C$.
\end{lemma}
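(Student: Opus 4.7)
The plan is to prove both directions by contrapositive/contradiction, exploiting the definition of maximality of an independent set.

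For the forward direction, I would assume that there exists an independent set $I$ disjoint from $C$ that dominates $C$, and derive that $C$ is not strong. The idea is to extend $I$ to a maximal independent set $I'$ and argue that $I'\cap C=\emptyset$. Indeed, since every vertex of $C$ already has a neighbour in $I\subseteq I'$, no vertex of $C$ can lie in the independent set $I'$. Thus $I'$ is a maximal independent set avoiding $C$, so $C$ is not strong.

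For the reverse direction, I would assume $C$ is not strong, meaning there is a maximal independent set $I$ with $I\cap C=\emptyset$. Then $I$ itself is disjoint from $C$, and the goal is to show that $I$ dominates $C$. If some $v\in C$ had no neighbour in $I$, then $I\cup\{v\}$ would still be independent (and strictly larger than $I$, since $v\notin I$ because $I\cap C=\emptyset$), contradicting the maximality of $I$. Hence every vertex of $C$ has a neighbour in $I$, so $I$ dominates $C$.

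Neither direction presents a real obstacle; the proof essentially unfolds by translating the definition of ``strong'' (intersects every maximal independent set) into the equivalent statement about the impossibility of completing a $C$-avoiding independent set into a maximal one, which is exactly the domination condition. The only care needed is to make sure that the independent set $I$ produced in the reverse direction is genuinely disjoint from $C$ (which follows from $I\cap C=\emptyset$) so that the hypothesis of the lemma can be applied via contrapositive, and that in the forward direction the extension of $I$ to a maximal independent set $I'$ preserves disjointness with $C$, which follows from the domination of $C$ by $I$.
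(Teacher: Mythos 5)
Your proof is correct and is essentially the standard argument for this fact (the paper itself only cites the lemma from \cite{HMR-new} without reproducing a proof): both directions translate maximality of an independent set into the domination condition exactly as intended, and the key disjointness points are handled properly in each direction.
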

%

\begin{proposition}\cite[Lemma 6.7]{HMR-new}\label{prop:trianglefree-regular}
Let $\Gamma$ be a connected $m$-regular graph. Then $\Gamma$ has a strong clique of size $2$ if and only if $\Gamma\cong K_{m,m}$. 
\end{proposition}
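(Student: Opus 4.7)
The plan is to handle the two directions separately, with the reverse direction carrying the substance. The forward direction is immediate: if $\Gamma\cong K_{m,m}$ with bipartition $\{X,Y\}$, then $X$ and $Y$ are the only maximal independent sets of $\Gamma$, so any edge $\{u,v\}$ has one endpoint in each part and meets both maximal independent sets, hence is strong.

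For the reverse direction, suppose $\{u,v\}$ is a strong clique of size $2$. I would set $A:=\Gamma(u)\setminus\{v\}$ and $B:=\Gamma(v)\setminus\{u\}$, so that $|A|=|B|=m-1$ by $m$-regularity. The first step is to show $A\cap B=\emptyset$: if $w$ were a common neighbour of $u$ and $v$, then any maximal independent set $I$ containing $w$ would avoid both $u$ and $v$, contradicting the strongness of $\{u,v\}$.

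The central use of strongness comes next, via Lemma~\ref{lem:dominates}. For every $a\in A$ and $b\in B$, the pair $\{a,b\}$ is disjoint from $\{u,v\}$ and dominates $\{u,v\}$ (since $a\sim u$ and $b\sim v$); hence $\{a,b\}$ cannot be independent, forcing $ab\in E(\Gamma)$. Thus all edges between $A$ and $B$ are present. Each $a\in A$ now already has $m$ neighbours inside $\{u\}\cup B$, exhausting its degree, so $A$ is independent and no vertex of $A$ has a neighbour outside $\{u\}\cup B$. Symmetrically, $B$ is independent and its vertices have no neighbours outside $\{v\}\cup A$.

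Consequently the set $\{u,v\}\cup A\cup B$ has no edges to the remaining vertices, and connectedness of $\Gamma$ forces $V(\Gamma)=\{u,v\}\cup A\cup B$. The edge structure identified above is precisely that of $K_{m,m}$ with parts $\{u\}\cup B$ and $\{v\}\cup A$, completing the argument. There is no serious obstacle: the only delicate move is picking the two-vertex test set $\{a,b\}$ to feed into Lemma~\ref{lem:dominates} in order to force the complete bipartite join between $A$ and $B$; after that, degree counting and connectedness finish the proof mechanically.
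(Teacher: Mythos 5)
Your argument is correct and complete: the key step of feeding the pairs $\{a,b\}$ with $a\in\Gamma(u)\setminus\{v\}$, $b\in\Gamma(v)\setminus\{u\}$ into Lemma~\ref{lem:dominates} forces the complete join between the two private neighbourhoods, and the degree count plus connectedness then pin the graph down as $K_{m,m}$ (the degenerate cases $m=1,2$ also go through). Note that the paper does not prove this proposition itself but cites it from \cite[Lemma 6.7]{HMR-new}, so there is no in-paper proof to compare with; your proof is the natural self-contained argument and could stand in for the citation.
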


\section{Characterization of strong cliques in vertex-transitive graphs}

In this section we will give a characterization of strong cliques in vertex-transitive graphs. The main result is stated in the following theorem.

\begin{theorem}\label{thm:strongclique}
Let $\Gamma$ be a vertex-transitive graph and let $C$ be a clique of $\Gamma$. Then $C$ is a strong clique if and only if  $|C||I|=|V(\Gamma)|$ for every maximal independent set $I$ of $\Gamma$.
\end{theorem}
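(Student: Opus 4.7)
The plan is to use a double-counting argument run over the orbit $\mathcal{O} = \{\varphi(C) : \varphi \in Aut(\Gamma)\}$ of $C$ under the automorphism group. First I would observe that every $C' \in \mathcal{O}$ is a clique of size $|C|$, and, because automorphisms preserve strong cliques (as recorded in the preliminaries), every $C' \in \mathcal{O}$ is strong whenever $C$ is strong. Vertex-transitivity guarantees that every vertex of $\Gamma$ lies in the same number $k$ of members of $\mathcal{O}$, and double-counting vertex/clique incidences $(v, C')$ with $v \in C' \in \mathcal{O}$ gives $k\,|V(\Gamma)| = |\mathcal{O}|\,|C|$, i.e.\ $k = |\mathcal{O}|\,|C|/|V(\Gamma)|$.

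For the forward direction, assume $C$ is strong and fix a maximal independent set $I$. I would argue that every $C' \in \mathcal{O}$ meets $I$ in exactly one vertex: at most one because $C'$ is a clique and $I$ is independent, at least one because $C'$ is a strong clique. Counting pairs $(v, C') \in I \times \mathcal{O}$ with $v \in C'$ in two ways then gives
$$|\mathcal{O}| \;=\; \sum_{C' \in \mathcal{O}} |C' \cap I| \;=\; \sum_{v \in I} |\{C' \in \mathcal{O} : v \in C'\}| \;=\; k\,|I|,$$
and substituting the formula for $k$ produces the desired equality $|C|\,|I| = |V(\Gamma)|$.

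For the converse, I would assume the numerical hypothesis and run essentially the same count backwards. Fix any maximal independent set $I$; the same two-way incidence count still yields $\sum_{C' \in \mathcal{O}} |C' \cap I| = k\,|I|$, and plugging in $k = |\mathcal{O}|\,|C|/|V(\Gamma)|$ together with the hypothesis $|C|\,|I| = |V(\Gamma)|$ gives the value $|\mathcal{O}|$. On the other hand each summand is at most $1$ (as $C'$ is a clique and $I$ is independent), so equality of the sum with $|\mathcal{O}|$ forces $|C' \cap I| = 1$ for every $C' \in \mathcal{O}$, and in particular for $C' = C$. Hence $C$ meets every maximal independent set, so $C$ is strong.

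I do not foresee a substantial obstacle: the argument is an elementary double-count. The two points that matter are that vertex-transitivity supplies a uniform multiplicity $k$ of $C$-translates at each vertex, and that the hypothesis $|C|\,|I| = |V(\Gamma)|$ is precisely what makes the trivial bound $|C' \cap I| \leq 1$ tight on average, and therefore everywhere.
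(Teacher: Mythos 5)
Your proof is correct and is essentially the paper's argument in different bookkeeping: the paper counts triples $(x,y,\varphi)\in C\times I\times \Aut(\Gamma)$ and invokes the orbit--stabilizer theorem, while you count the same incidences after passing to the orbit $\mathcal{O}$ of distinct translates of $C$, with the uniform multiplicity $k$ supplied by vertex-transitivity. Both hinge on exactly the same two facts --- a translate of $C$ meets $I$ in at most one vertex, and in exactly one when the translate is strong --- so that $|C||I|=|V(\Gamma)|$ arises as the tightness condition of one and the same count.
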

\begin{proof}
Let $C$ be a given clique of $\Gamma$ and denote with 
 $G$ be the automorphism group of $\Gamma$. Let $I$ be an arbitrary maximal independent set of $\Gamma$. Denote by $\Omega(I)$ the set of triples $(x,y,\varphi)$ such that $x\in C$, $y\in I$, $\varphi\in G$ and $\varphi(x)=y$. Given $(x,y)\in C\times I$, the set of elements of $G$ which map $x$ to $y$ is a coset of $G_x$ and hence has cardinality $|G_x|$ (see \cite{MR1829620}[Lemma 2.2.1]). This shows that 
\begin{equation}\label{eq:omega}
|\Omega(I)|=|C||I||G_x|.
\end{equation}

Let $\varphi\in G$ be an arbitrary element of $G$. Then $(x,y,\varphi)\in \Omega(I)$ if and only if $y\in \varphi(C)\cap I$ and $x=\varphi^{-1}(y)$. Therefore
$$
|\{(x,y,\varphi)\mid x\in C,~ y \in I,~\varphi(x)=y \}|=|\varphi(C)\cap I|\leq 1.
$$

This shows, that for any $\varphi \in G$, the number of elements of $\Omega(I)$ with third coordinate equal to $\varphi$ is $|\varphi(C)\cap I|$. Therefore, $|\Omega(I)|\leq |G|$ with equality if and only if $\varphi(C)\cap I\neq \emptyset$, for every $\varphi\in G$. 

Suppose first that $C$ is a strong clique.  Then $\varphi(C)$ is a strong clique for every $\varphi\in G$ and hence  $|\varphi(C)\cap I|=1$, for every maximal independent set $I$ and every $\varphi\in G$. Therefore $|\Omega(I)|=|G|$.  By the orbit-stabilizer theorem (see \cite[Lemma 2.2.2]{MR1829620}), we have $|G|=|V(\Gamma)||G_x|$ and combining this with \eqref{eq:omega} we obtain $|V(\Gamma)|=|C||I|$, for every maximal independent set $I$.

Conversely, suppose that  $|C||I|=|V(\Gamma)|$, for every maximal independent set $I$. Then $|\Omega(I)|=|C||I||G_x|=|V(\Gamma)||G_x|=|G|$.
As noted above, we have that $|\Omega(I)|=|G|$, if and only if $\varphi(C)\cap I\neq \emptyset$, for every $\varphi\in G$. Since $|\Omega(I)|=|G|$, for every maximal independent set $I$, it follows that $C$ is a strong clique.
\end{proof}

We now state several corollaries of Theorem~\ref{thm:strongclique}.

\begin{corollary}\label{cor:strongclique}
Let $\Gamma$ be a vertex-transitive graph. If there exists a strong clique in $\Gamma$, then $\Gamma$ is well-covered. Moreover, every strong clique in $\Gamma$ is maximum.
\end{corollary}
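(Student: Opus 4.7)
Both assertions reduce directly to Theorem~\ref{thm:strongclique}. Suppose $C$ is a strong clique in $\Gamma$. Applying the theorem, for every maximal independent set $I$ we have $|I|=|V(\Gamma)|/|C|$. Since the right-hand side depends only on $\Gamma$ and on the fixed clique $C$, every maximal independent set has the same cardinality, which is exactly what it means for $\Gamma$ to be well-covered.

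For the \emph{moreover} clause, the constant value $|V(\Gamma)|/|C|$ obtained above must coincide with $\alpha(\Gamma)$, because any maximum independent set is maximal and therefore participates in the equality $|C||I|=|V(\Gamma)|$. Hence $|C|\alpha(\Gamma)=|V(\Gamma)|$, i.e.\ $|C|=|V(\Gamma)|/\alpha(\Gamma)$. I would then invoke the classical inequality $\omega(\Gamma)\alpha(\Gamma)\leq|V(\Gamma)|$, valid for every vertex-transitive graph, to deduce $\omega(\Gamma)\leq|V(\Gamma)|/\alpha(\Gamma)=|C|$. Combining this with the trivial upper bound $|C|\leq\omega(\Gamma)$ (since $C$ is a clique) yields $|C|=\omega(\Gamma)$, so $C$ is maximum.

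The only step requiring any care is the appeal to $\omega(\Gamma)\alpha(\Gamma)\leq|V(\Gamma)|$ for vertex-transitive graphs; this is standard, but if one prefers self-containment it can be recovered by exactly the same double-counting argument used in the proof of Theorem~\ref{thm:strongclique}, applied to triples $(x,y,\varphi)$ with $x$ in a maximum clique $K$, $y$ in a maximum independent set $I$, and $\varphi(x)=y$: the bound $|\varphi(K)\cap I|\leq 1$ gives $|K||I||G_x|\leq|G|$, whence $\omega(\Gamma)\alpha(\Gamma)\leq|V(\Gamma)|$. Beyond this observation, no real obstacle arises.
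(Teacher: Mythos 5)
Your proposal is correct and follows essentially the same route as the paper: apply Theorem~\ref{thm:strongclique} to get $|I|=|V(\Gamma)|/|C|$ for all maximal independent sets (well-coveredness), then combine $|C|\alpha(\Gamma)=|V(\Gamma)|$ with the standard vertex-transitive inequality $\alpha(\Gamma)\omega(\Gamma)\leq|V(\Gamma)|$ to conclude $|C|=\omega(\Gamma)$. Your optional re-derivation of that inequality by the same double-counting as in Theorem~\ref{thm:strongclique} is a fine self-contained substitute for the paper's citation, but it does not change the argument.
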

\begin{proof}
Suppose that $C$ is a strong clique in a vertex-transitive graph $\Gamma$. Then $|C||I|=|V(\Gamma)|$, for every maximal independent set $I$. Therefore, all maximal independent sets in $\Gamma$ are of the same size. Moreover, since $\alpha(\Gamma)\omega(\Gamma)\leq |V(\Gamma)|$ (see, e.g.,~\cite[Chapter 7, Exercise 8]{MR1829620}), we infer that $C$ is a maximum clique.
\end{proof}

{\em Chromatic number} of a graph $\Gamma$, denoted by $\chi(\Gamma)$ is the minimum number of independent sets that partition $V(\Gamma)$. It is easy to see that for every graph $\Gamma$ we have $\omega(\Gamma)\leq \chi(\Gamma)$.
For some results regarding chromatic number of vertex-transitive graphs (and some of the subclases of vertex-transitive graphs) see \cite{Cranston,Konstantinova,Godsil,Klotz,Alon,Mrazovic,Green,Ilic}.

\begin{corollary}
Let $\Gamma$ be a vertex-transitive graph admitting a strong clique. Then $\Gamma$ is localizable if and only if $\chi (\overline{\Gamma})=\omega(\overline{\Gamma})$.
\end{corollary}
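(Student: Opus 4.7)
The plan is to translate the statement $\chi(\overline{\Gamma})=\omega(\overline{\Gamma})$ into the language of $\Gamma$ and then combine it with the numerical characterization of strong cliques already established. Recall that an independent set in $\overline{\Gamma}$ is a clique in $\Gamma$, so $\chi(\overline{\Gamma})$ is precisely the minimum number of cliques needed to partition $V(\Gamma)$ (the \emph{clique cover number} of $\Gamma$), while $\omega(\overline{\Gamma})=\alpha(\Gamma)$. So the condition $\chi(\overline{\Gamma})=\omega(\overline{\Gamma})$ says that $V(\Gamma)$ can be partitioned into exactly $\alpha(\Gamma)$ cliques.

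First I would extract the key numerical identity. Since $\Gamma$ admits a strong clique $C$, Corollary~\ref{cor:strongclique} gives that $\Gamma$ is well-covered and that $C$ is maximum; together with Theorem~\ref{thm:strongclique} applied to any maximal independent set $I$, this yields
\[
\omega(\Gamma)\,\alpha(\Gamma)=|C|\,|I|=|V(\Gamma)|.
\]

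For the forward direction, suppose $\Gamma$ is localizable, with vertex partition $V(\Gamma)=C_1\cup\cdots\cup C_k$ into strong cliques. By Corollary~\ref{cor:strongclique}, each $C_i$ is maximum, so $|C_i|=\omega(\Gamma)$ and hence $k=|V(\Gamma)|/\omega(\Gamma)=\alpha(\Gamma)$. This is a clique cover of size $\alpha(\Gamma)=\omega(\overline{\Gamma})$, proving $\chi(\overline{\Gamma})\le\omega(\overline{\Gamma})$; the reverse inequality is the standard bound $\omega\le\chi$ applied to $\overline{\Gamma}$.

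For the reverse direction, assume $\chi(\overline{\Gamma})=\omega(\overline{\Gamma})=\alpha(\Gamma)$, so that $V(\Gamma)$ is partitioned into $\alpha(\Gamma)$ cliques $C_1,\dots,C_{\alpha(\Gamma)}$. Using $\omega(\Gamma)\alpha(\Gamma)=|V(\Gamma)|$ and $|C_i|\le\omega(\Gamma)$, a counting argument on $\sum_i|C_i|=|V(\Gamma)|$ forces $|C_i|=\omega(\Gamma)$ for every $i$. Then for any maximal independent set $I$ of $\Gamma$, well-coveredness gives $|I|=\alpha(\Gamma)$, and hence $|C_i||I|=\omega(\Gamma)\alpha(\Gamma)=|V(\Gamma)|$. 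Applying Theorem~\ref{thm:strongclique} to each $C_i$ shows it is a strong clique, so the partition witnesses that $\Gamma$ is localizable.

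I do not foresee a genuine obstacle here: the whole argument is a bookkeeping exercise once Theorem~\ref{thm:strongclique} and Corollary~\ref{cor:strongclique} are in hand. The only subtle point to keep straight is that Theorem~\ref{thm:strongclique} does not require the tested clique to be maximal, which is exactly what permits us to upgrade an arbitrary maximum-sized clique in the partition to a strong one.
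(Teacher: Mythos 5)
Your proof is correct, but it follows a different route from the paper. The paper's argument is essentially a two-line reduction: since $\Gamma$ admits a strong clique, Corollary~\ref{cor:strongclique} gives that $\Gamma$ is well-covered, and then the paper invokes an external result (\cite[Theorem 2.1(d)]{HujdurovicMR2018}) stating that a well-covered graph is localizable if and only if $\alpha(\Gamma)=\theta(\Gamma)$, where $\theta$ is the clique cover number; the conclusion follows from the translations $\theta(\Gamma)=\chi(\overline{\Gamma})$ and $\alpha(\Gamma)=\omega(\overline{\Gamma})$. You instead prove both directions from scratch inside the vertex-transitive setting: you first extract the identity $\omega(\Gamma)\alpha(\Gamma)=|V(\Gamma)|$ from Theorem~\ref{thm:strongclique} and Corollary~\ref{cor:strongclique}, use it in the forward direction to count the parts of a partition into strong (hence maximum) cliques, and in the reverse direction use the counting argument $\sum_i|C_i|=|V(\Gamma)|=\omega(\Gamma)\alpha(\Gamma)$ with $|C_i|\le\omega(\Gamma)$ to force every part of an $\alpha(\Gamma)$-clique cover to be maximum, after which well-coveredness and Theorem~\ref{thm:strongclique} certify that each part is strong. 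The paper's proof is shorter but leans on the cited characterization of localizable well-covered graphs; yours is self-contained (modulo the paper's own Theorem~\ref{thm:strongclique} and Corollary~\ref{cor:strongclique}) and yields the slightly stronger observation that, under the hypothesis, \emph{any} clique cover of size $\alpha(\Gamma)$ automatically consists of strong cliques. Your closing remark is also on point: Theorem~\ref{thm:strongclique} is stated for arbitrary cliques, which is what lets you upgrade the cover's parts to strong cliques directly (though in fact, being maximum, they are maximal anyway).
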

\begin{proof}
Let $\Gamma$ be a vertex-transitive graph admitting a strong clique. Then by Corollary~\ref{cor:strongclique} it follows that $\Gamma$ is well-covered. 
By \cite[Theorem 2.1]{HujdurovicMR2018}(d) it follows that $\Gamma$ is localizable if and only if $\alpha(\Gamma)=\theta(\Gamma)$, where $\theta$ denotes the clique-cover number, that is, it is the minimal number of cliques that cover the vertex-set. Since $\theta(\Gamma)=\chi(\overline{\Gamma})$ and $\alpha(\Gamma)=\omega(\overline{\Gamma})$ the result follows.
\end{proof}

\begin{corollary}
If a vertex-transitive graph $\Gamma$ has a maximal clique of size $|V(\Gamma)|/2$, then $\Gamma$ is localizable.
\end{corollary}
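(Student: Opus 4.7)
The plan is to identify the structure forced by a maximal clique of size $n/2 := |V(\Gamma)|/2$ and show that the vertex partition $\{C,\, V(\Gamma)\setminus C\}$ works.

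First I would exploit the inequality $\alpha(\Gamma)\omega(\Gamma)\leq |V(\Gamma)|$ valid for vertex-transitive graphs (already cited in the proof of Corollary~\ref{cor:strongclique}). Since $\omega(\Gamma)\geq |C|=n/2$, this forces $\alpha(\Gamma)\leq 2$. The case $\alpha(\Gamma)=1$ degenerates: it would make $\Gamma$ complete, hence the only maximal clique has size $n$, incompatible with $|C|=n/2$ unless $n=0$. So one may assume $\alpha(\Gamma)=2$, which in turn gives $\omega(\Gamma)=n/2$ and $C$ is a maximum clique.

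Next I would argue that every maximal independent set has size exactly $2$. A maximal independent set of size $1$ would be a universal vertex, and by vertex-transitivity every vertex would then be universal, forcing $\Gamma$ to be complete, contradicting $\alpha(\Gamma)=2$. Combined with $\alpha(\Gamma)=2$, every maximal independent set has size $2$. Now Theorem~\ref{thm:strongclique} applies directly: $|C|\cdot|I|=(n/2)\cdot 2=n$ for every maximal independent set $I$, so $C$ is a strong clique.

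The main obstacle is showing that $V(\Gamma)\setminus C$ is itself a clique (then the partition $\{C,V(\Gamma)\setminus C\}$ will be our decomposition into strong cliques). Suppose for contradiction that there are non-adjacent vertices $u,v\in V(\Gamma)\setminus C$. Then $\{u,v\}$ is an independent set and, by the previous paragraph, every maximal independent set extending it has size $2$, so $\{u,v\}$ itself is maximal. But $C$ is a strong clique, so $C\cap\{u,v\}\neq\emptyset$, contradicting $u,v\notin C$. Thus $V(\Gamma)\setminus C$ is a clique of size $n/2$, and applying Theorem~\ref{thm:strongclique} once more (this time to the clique $V(\Gamma)\setminus C$) shows it is strong too. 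Hence $\{C,\,V(\Gamma)\setminus C\}$ is a partition of $V(\Gamma)$ into two strong cliques, proving $\Gamma$ is localizable.
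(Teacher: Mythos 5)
Your proof is correct and follows essentially the same route as the paper's: the inequality $\alpha(\Gamma)\omega(\Gamma)\leq |V(\Gamma)|$ forces $\alpha(\Gamma)=2$, Theorem~\ref{thm:strongclique} makes $C$ strong, strongness of $C$ forces $V(\Gamma)\setminus C$ to be a clique, and a second application of the theorem makes it strong, yielding the partition into two strong cliques.
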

\begin{proof}
Let $C$ be a maximal clique in $\Gamma$ of size $|V(\Gamma)|/2$. Then, since $\alpha(\Gamma)\omega(\Gamma)\leq |V(\Gamma)|$, it follows that $\alpha(\Gamma)\leq 2$. Since $\Gamma$ is not a complete graph, it follows that there is no maximal independent set of $\Gamma$ of size 1. Hence all maximal-independent sets of $\Gamma$ are of size $2$. Then by Theorem~\ref{thm:strongclique} it follows that $C$ is a strong clique. Let $C_1=V(\Gamma)\setminus C$. We claim that $C_1$ is also a clique. Suppose that $C_1$ is not a clique, and let $x,y\in C_1$ be non-adjacent vertices. Then $\{x,y\}$ is an independent set in $\Gamma$, and by the above observation, it must be maximal. However, this contradicts the previously proved fact that $C$ is strong clique, since $C\cap \{x,y\}=\emptyset$. 
The obtained contradiction shows that $C_1$ is a clique, and since it is of size $|V(\Gamma)|/2$, it follows by Theorem~\ref{thm:strongclique} that it is strong. Hence, $\{C,C_1\}$ is a partition of $V(\Gamma)$ into strong cliques.
\end{proof}

The following example shows that the previous result cannot be generalized further, namely, for every $n\geq 3$ there exists a vertex-transitive graph admitting a strong clique of size $|V(\Gamma)|/n$ which is not localizable.

\begin{example}
For $n\geq 3$, $L(K_{2n})$ is vertex-transitive, has strong cliques and $\omega(L(K_{2n}))= |V(L(K_{2n}))|/n$. Moreover, this graph is not localizable.
\end{example}

In the following proposition, we prove that for testing if a given vertex-transitive graph is CIS, it suffices to check if a given maximal clique and a given maximal independent set are strong.
\begin{proposition}\label{prop:CIS}
A vertex-transitive graph is CIS if and only if it admits a strong clique and a strong independent set.
\end{proposition}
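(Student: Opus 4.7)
The forward implication is immediate from the definition: a CIS graph has all its maximal cliques strong and all its maximal independent sets strong, and since every graph admits at least one maximal clique and one maximal independent set, such a graph trivially admits both a strong clique and a strong independent set.

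For the converse, suppose $\Gamma$ is vertex-transitive and admits a strong clique $C$ and a strong independent set $I$. I would first record the easy observation that any strong independent set is itself maximal. Indeed, if $v \notin I$ were a vertex such that $I \cup \{v\}$ remained independent, then $v$ would have no neighbour in $I$, and any maximal clique $K$ containing $v$ would be disjoint from $I$, contradicting strongness of $I$. With $I$ maximal, Theorem~\ref{thm:strongclique} applied to $C$ yields the crucial identity
\[
|C|\,|I| = |V(\Gamma)|.
\]

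Next I would invoke Theorem~\ref{thm:strongclique} twice more: once in $\Gamma$ and once in $\overline{\Gamma}$, noting that $\overline{\Gamma}$ is also vertex-transitive (it has the same automorphism group). From the strongness of $C$ in $\Gamma$, every maximal independent set $J$ of $\Gamma$ satisfies $|J| = |V(\Gamma)|/|C|$. Since $I$ is a strong clique of $\overline{\Gamma}$, applying the theorem in $\overline{\Gamma}$ shows that every maximal clique $K$ of $\Gamma$ (i.e., every maximal independent set of $\overline{\Gamma}$) has size $|V(\Gamma)|/|I|$.

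Finally, for an arbitrary maximal clique $K$ and arbitrary maximal independent set $J$ of $\Gamma$,
\[
|K|\,|J| = \frac{|V(\Gamma)|}{|I|}\cdot\frac{|V(\Gamma)|}{|C|} = \frac{|V(\Gamma)|^2}{|C|\,|I|} = |V(\Gamma)|,
\]
using the identity from the first step. By Theorem~\ref{thm:strongclique}, $K$ is a strong clique; since $K$ was arbitrary, $\Gamma$ is CIS. There is really no obstacle here beyond spotting that a strong independent set is automatically maximal, which is exactly what unlocks the identity $|C|\,|I| = |V(\Gamma)|$ and allows Theorem~\ref{thm:strongclique} to be leveraged symmetrically in $\Gamma$ and $\overline{\Gamma}$.
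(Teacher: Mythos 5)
Your proof is correct, and it follows the paper's opening moves exactly: apply Theorem~\ref{thm:strongclique} to $C$ in $\Gamma$ and to $I$ viewed as a strong clique of the (equally vertex-transitive) complement, obtaining $|J|=|V(\Gamma)|/|C|$ for every maximal independent set $J$ and $|K|=|V(\Gamma)|/|I|$ for every maximal clique $K$. Where you diverge is in how you extract ``CIS'' from these identities. The paper combines them into the three conditions well-covered, co-well-covered, and $\alpha(\Gamma)\omega(\Gamma)=|V(\Gamma)|$ (using, implicitly via Corollary~\ref{cor:strongclique}, that the strong clique $C$ is maximum) and then cites the external characterization of vertex-transitive CIS graphs from \cite[Theorem 3.1]{MR3278773}. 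You instead stay inside the paper: you observe that a strong independent set is automatically maximal, which gives the pivot identity $|C||I|=|V(\Gamma)|$, and then you feed $|K||J|=|V(\Gamma)|$ back into the converse direction of Theorem~\ref{thm:strongclique} to conclude that every maximal clique is strong, i.e.\ $\Gamma$ is CIS by definition. Your maximality argument for $I$ is sound (a vertex extending $I$ would lie in a maximal clique disjoint from $I$). The trade-off: the paper's proof is shorter but leans on an outside theorem, while yours is self-contained modulo Theorem~\ref{thm:strongclique} and in fact reproves the relevant direction of that external characterization along the way.
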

\begin{proof}
If $\Gamma$ is CIS, then every maximal clique and every maximal independent set is strong.
Suppose that $\Gamma$ is a vertex-transitive graph and suppose that $C$ is a strong clique and $I$ is a strong independent set in $\Gamma$. Then $I$ is a strong clique in $\overline{\Gamma}$. By Theorem~\ref{thm:strongclique} it follows that $|C||I'|=|V(\Gamma)$, for every maximal independent set $I'$ in $\Gamma$ and $|C'||I|=|V(\Gamma)|$, for every maximal clique $C'$ in $\Gamma$. This implies that all maximal cliques in $\Gamma$ are of the same size, all maximal independent sets in $\Gamma$ are of the same size and $\alpha(\Gamma)\omega(\Gamma)=|V(\Gamma)|$. By \cite[Theorem 3.1]{MR3278773} it follows that $\Gamma$ is CIS.
\end{proof}

We conclude this section with the following lemma.
\begin{lemma}\label{lem:irreducible}
Let $\Gamma$ be an irreducible vertex-transitive graph and let $C$ be a strong clique in $\Gamma$. Then for any maximal clique $C'\neq C$ of $\Gamma$ we have $|C\cap C'|<|C|-1$.
\end{lemma}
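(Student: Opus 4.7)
The plan is to argue by contradiction. Assume $C \neq C'$ and $|C \cap C'| = |C| - 1$. By Corollary~\ref{cor:strongclique}, $C$ is a maximum clique, so $|C| = \omega(\Gamma)$. Write $u$ for the unique vertex in $C \setminus C'$ and $W = C' \setminus C$; then the inequality
\[
|C'| = (|C|-1) + |W| \leq \omega(\Gamma) = |C|
\]
forces $|W| = 1$, say $W = \{w\}$. Moreover $u$ and $w$ are non-adjacent, since otherwise $C \cup \{w\}$ would be a clique, contradicting the maximality of the strong clique $C$.

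The key preliminary observation is that $C'$ is itself a strong clique. By Theorem~\ref{thm:strongclique}, the assumption that $C$ is strong is equivalent to the identity $|C||I| = |V(\Gamma)|$ holding for every maximal independent set $I$. Since $|C'| = |C|$, the same identity holds with $C'$ in place of $C$, so Theorem~\ref{thm:strongclique} applies again and yields that $C'$ is strong. (In fact this shows that every maximum clique of $\Gamma$ is strong.)

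With this in hand, I aim to show that $u$ and $w$ have the same open neighborhood, $\Gamma(u) = \Gamma(w)$, which will contradict the irreducibility of $\Gamma$. Every vertex of $C \setminus \{u\} = C' \setminus \{w\}$ is adjacent to both $u$ and $w$, since these sets are contained in the cliques $C$ and $C'$, respectively. For any other vertex $v \neq u, w$, I argue in both directions. Suppose $v$ is adjacent to $u$ but not to $w$: then $\{v, w\}$ is independent and extends to a maximal independent set $I$. Since $C$ is strong, $|I \cap C| = 1$, and since $w$ is adjacent to every vertex of $C \setminus \{u\}$, we must have $I \cap C = \{u\}$; thus $u \in I$, contradicting the fact that $v \in I$ is adjacent to $u$. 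Symmetrically, if $v$ is adjacent to $w$ but not to $u$, extending $\{v, u\}$ to a maximal independent set $I$ and using that $C'$ is strong forces $I \cap C' = \{w\}$, hence $w \in I$, which is again a contradiction.

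Consequently $\Gamma(u) = \Gamma(w)$ with $u \neq w$, contradicting the assumption that $\Gamma$ is irreducible. The main obstacle is the observation that $C'$ must also be a strong clique; without it, the second direction of the neighborhood comparison collapses (one would lose the symmetry needed to force $w \in I$). Once that step is in place, the rest reduces to two short independent-set extension arguments powered by Theorem~\ref{thm:strongclique}.
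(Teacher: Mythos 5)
Your proof is correct. The overall skeleton matches the paper's: reduce to $|C\cap C'|=|C|-1$ and $|C'|=|C|$, isolate the two vertices $u\in C\setminus C'$ and $w\in C'\setminus C$, show they are non-adjacent with $\Gamma(u)=\Gamma(w)$, and contradict irreducibility. The mechanism for the neighborhood equality differs, though. The paper uses the domination characterization (Lemma~\ref{lem:dominates}): a neighbor $x$ of $u$ not adjacent to $w$ would give the independent set $\{x,w\}$, disjoint from $C$ and dominating it, so $\Gamma(u)\subseteq\Gamma(w)$, and equality then follows from regularity of the vertex-transitive graph. You instead observe via Theorem~\ref{thm:strongclique} that $C'$, being a clique of the same size as the strong clique $C$, is itself strong, and then run two symmetric maximal-independent-set extension arguments to get both inclusions directly; this bypasses Lemma~\ref{lem:dominates} and the appeal to regularity, at the cost of one extra (but clean) observation, which also makes explicit the useful fact that every maximum clique is strong once one strong clique exists. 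One small gloss: the inequality $|C'|=(|C|-1)+|W|\leq\omega(\Gamma)$ only gives $|W|\leq 1$; the case $|W|=0$ must be excluded because then $C'$ would be a proper subset of the clique $C$, contradicting maximality of $C'$ --- a one-line fix, and the paper is equally brief on this point.
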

\begin{proof}
Let $C$ be a strong clique in an irreducible vertex-transitive graph $\Gamma$. Then by Corollary~\ref{cor:strongclique} it follows that $|C|$ is a maximum clique in $\Gamma$. Let $C'$ be a maximal clique in $\Gamma$ different from $C$. Suppose that $|C\cap C'|\geq |C|-1$. It is clear that $|C\cap C'|\neq |C|$ since $C$ is maximal clique, and $C'\neq C$. Therefore, $|C\cap C'|= |C|-1$. Since $C'$ is a maximal clique, and it is different from $C$ it follows that $|C'|=|C|$.
Let $C\setminus C'=\{v_1\}$ and $C'\setminus C=\{v_2\}$.
Let $x$ be a neighbour of $v_1$. If $x$ is not adjacent to $v_2$, then $\{x,v_2\}$ is an independent set disjoint with $C$ which dominates $C$. By Lemma~\ref{lem:dominates} this is contradiction with the assumption that $C$ is strong clique in $\Gamma$. Therefore, $x$ is adjacent to $v_2$. This shows that $\Gamma(v_1)=\Gamma(v_2)$, contrary to the assumption that $\Gamma$ is irreducible. 
\end{proof}

\section{Vertex-transitive CIS non localizable graphs}\label{sec:CIS non localizable}

In \cite[Question 6.2]{MR3278773} the following question was posed.

\begin{question}\cite{MR3278773}
\label{que:6.2}
Does every vertex-transitive CIS graph $\Gamma$ admit a decomposition of its vertex set into $\omega(\Gamma)$ independent sets?
\end{question}
The main result of this section is to answer the above question.
It is easy to see that Question~\ref{que:6.2} is equivalent to asking if every vertex-transitive CIS graph is localizable.
Namely, if $\Gamma$ is a vertex-transitive CIS graph, then $\overline{\Gamma}$ is also vertex-transitive CIS graph and it is easy to see that $\Gamma$ admits a decomposition of its vertex set into $\omega(\Gamma)$ independent sets if and only if $\overline{\Gamma}$ admits a decomposition of its vertex set into $\alpha(\Gamma)$ cliques.


Before answering Question~\ref{que:6.2} we need one more definition and a lemma.
For a graph $\Gamma$ we denote with $\Gamma_{\mathcal{Q}}$ the {\it graph of maximal cliques of $\Gamma$}, that is the graph with maximal cliques of $\Gamma$ as vertices, and two such vertices adjacent in $\Gamma_{\mathcal{Q}}$ if and only if they have non-empty intersection.

\begin{lemma}\label{lem:cliquegraph}
Let $\Gamma$ be a  vertex-transitive CIS graph. Then $\Gamma$ is localizable if and only if $\alpha(\Gamma_{\mathcal{Q}})=\alpha(\Gamma)$.
\end{lemma}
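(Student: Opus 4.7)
The plan is to unpack both sides via the arithmetic equality $\alpha(\Gamma)\omega(\Gamma)=|V(\Gamma)|$ that holds for any vertex-transitive CIS graph (by \cite[Theorem 3.1]{MR3278773}, already exploited in the proof of Proposition~\ref{prop:CIS}), together with the two facts that in a vertex-transitive CIS graph every maximal clique has size exactly $\omega(\Gamma)$ and is strong, while in any vertex-transitive graph every strong clique is maximum (Corollary~\ref{cor:strongclique}). The key bridge is the obvious translation: an independent set in $\Gamma_{\mathcal{Q}}$ is precisely a family of pairwise vertex-disjoint maximal cliques of $\Gamma$.

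First I would establish the universal upper bound $\alpha(\Gamma_{\mathcal{Q}})\le \alpha(\Gamma)$ in the CIS vertex-transitive setting: any collection of $k$ pairwise disjoint maximal cliques covers exactly $k\,\omega(\Gamma)$ vertices, so $k\,\omega(\Gamma)\le |V(\Gamma)|=\alpha(\Gamma)\,\omega(\Gamma)$, giving $k\le \alpha(\Gamma)$. This shows that equality $\alpha(\Gamma_{\mathcal{Q}})=\alpha(\Gamma)$ is equivalent to the existence of a family of exactly $\alpha(\Gamma)$ pairwise disjoint maximal cliques, and such a family automatically covers all $\alpha(\Gamma)\,\omega(\Gamma)=|V(\Gamma)|$ vertices, i.e.\ it is a partition of $V(\Gamma)$ into maximal cliques.

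For the backward implication, assuming $\alpha(\Gamma_{\mathcal{Q}})=\alpha(\Gamma)$, the above observation gives a partition of $V(\Gamma)$ into maximal cliques. Since $\Gamma$ is CIS, each maximal clique is strong, so this is a partition into strong cliques, and $\Gamma$ is localizable. For the forward implication, assume $\Gamma$ is localizable and take a partition of $V(\Gamma)$ into strong cliques. By Corollary~\ref{cor:strongclique} each part is a maximum clique of $\Gamma$, in particular of size $\omega(\Gamma)$, so the number of parts is $|V(\Gamma)|/\omega(\Gamma)=\alpha(\Gamma)$; these parts are pairwise disjoint maximal cliques, hence form an independent set of size $\alpha(\Gamma)$ in $\Gamma_{\mathcal{Q}}$, yielding $\alpha(\Gamma_{\mathcal{Q}})\ge \alpha(\Gamma)$, and combined with the upper bound we get equality.

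There is really no serious obstacle here; the only thing one must be careful about is to invoke the right ingredients in the right order, in particular to note that being CIS is needed in the backward direction to upgrade ``partition into maximal cliques'' to ``partition into strong cliques'', while in the forward direction one uses Corollary~\ref{cor:strongclique} to ensure each strong clique in the localization is maximum and hence a vertex of $\Gamma_{\mathcal{Q}}$.
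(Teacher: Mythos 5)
Your argument is correct and follows essentially the same route as the paper: both directions rest on the identification of independent sets in $\Gamma_{\mathcal{Q}}$ with families of pairwise disjoint maximal cliques, plus the counting bound coming from the fact that in a vertex-transitive CIS graph all maximal cliques have size $\omega(\Gamma)$ and $|V(\Gamma)|=\alpha(\Gamma)\omega(\Gamma)$. Your explicit appeal to Corollary~\ref{cor:strongclique} in the forward direction is just a slightly more detailed phrasing of what the paper does via co-well-coveredness; there is no substantive difference.
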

\begin{proof}
Let $\Gamma$ be a  vertex-transitive CIS graph. Then $\Gamma$ is well-covered, co-well-covered, and $|V(\Gamma)|=\alpha(\Gamma)\cdot \omega(\Gamma)$. Suppose first that $\Gamma$ is localizable. Then there exists a partition of $V(\Gamma)$ into $\alpha(\Gamma)$ maximal cliques. This corresponds to a maximal independent set in $\Gamma_{\mathcal{Q}}$ of size $\alpha(\Gamma)$. Hence $\alpha(\Gamma_{\mathcal{Q}})\geq \alpha(\Gamma)$. Moreover, since $\Gamma$ is co-well-covered, and $|V(\Gamma)|=\alpha(\Gamma)\cdot \omega(\Gamma)$  it follows that there can't exist more than $\alpha(\Gamma)$ pairwise disjoint maximal cliques of $\Gamma$. Therefore $\alpha(\Gamma_{\mathcal{Q}})= \alpha(\Gamma)$.

Suppose now that $\alpha(\Gamma_{\mathcal{Q}})=\alpha(\Gamma)$. Then there exists an independent set in $\Gamma_{\mathcal{Q}}$ of size $\alpha(\Gamma)$, or equivalently, there exists $\alpha(\Gamma)$ pairwise disjoint maximal cliques of $\Gamma$. Since $\Gamma$ is co-well-covered, and $|V(\Gamma)|=\alpha(\Gamma)\cdot \omega(\Gamma)$ it follows that these $\alpha(\Gamma)$ maximal cliques form a partition of $V(\Gamma)$. Hence $\Gamma$ is localizable.
\end{proof}

Let $n>k>i\geq 0$ be integers, and let $I_n=\{1,\ldots,n\}$. {\it Generalized Johnson graph} $J(n,k,i)$ has as vertex set all $k$-elements subsets of $I_n$, with two vertices being adjacent if their intersection is of size $k$. For simplicity, vertex $\{x,y,\ldots,z\}$ of generalized Johnson graph will be written as $xy\ldots z$.

\begin{theorem}\label{thm:J(7,3,1) is CIS not localizable}
Generalized Johnson graph $J(7,3,1)$ is vertex-transitive CIS graph which is not localizable.
\end{theorem}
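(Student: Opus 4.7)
The plan is to invoke Proposition~\ref{prop:CIS} for the CIS property and Lemma~\ref{lem:cliquegraph} to rule out localizability. Throughout, write $\Gamma=J(7,3,1)$; its vertex set is $\binom{[7]}{3}$, of size $35$, and $\Gamma$ is vertex-transitive under the natural action of $S_7$ on $[7]$.

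To establish that $\Gamma$ is CIS, by Proposition~\ref{prop:CIS} it suffices to exhibit one strong clique and one strong independent set. I would take $F$ to be any Fano plane on $[7]$ (a Steiner triple system, whose lines pairwise meet in exactly one point, hence a $7$-clique of $\Gamma$) and $S=\{\{a,b,x\}:x\in[7]\setminus\{a,b\}\}$ for a fixed pair $\{a,b\}$ (an independent set of size $5$). Since $\overline{\Gamma}$ is also vertex-transitive, applying Theorem~\ref{thm:strongclique} to both $\Gamma$ and $\overline{\Gamma}$ reduces the task to showing that every maximal independent set of $\Gamma$ has size $5$ and every maximal clique has size $7$. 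For the independent sets, observe that two triples are non-adjacent in $\Gamma$ precisely when their intersection has size $0$ or $2$, and a case analysis based on whether some pair of $[7]$ is contained in three or more triples of the set reveals exactly two types of maximal independent set, both of size $5$: the star of five triples through a fixed pair, and the family $\binom{A}{3}\cup\{[7]\setminus A\}$ for some $4$-subset $A\subset[7]$. For the cliques, two adjacent triples share exactly one element and thus cover disjoint pairs, so a clique of size $c$ covers $3c$ of the $21$ pairs of $[7]$, giving $c\le 7$; a short verification shows that every clique of size at most $6$ extends to a Fano plane, so every maximal clique is a Fano plane.

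To rule out localizability, apply Lemma~\ref{lem:cliquegraph}: with $\alpha(\Gamma)=5$ and every maximal clique a Fano plane of size $7$, localizability of $\Gamma$ is equivalent to partitioning the $35$ triples of $[7]$ into five pairwise disjoint Fano planes --- that is, to the existence of a large set of Steiner triple systems $\mathrm{LSTS}(7)$. The classical theorem of Cayley (1850) states that $\mathrm{LSTS}(7)$ does not exist, and invoking it completes the argument. The main obstacle will be packaging this non-existence cleanly within the paper; if a bare citation is not deemed sufficient, a self-contained argument can be given by fixing two points $p,q\in[7]$ and exploiting the induced bijection between the five hypothetical Fano planes and the five third-points in $[7]\setminus\{p,q\}$, which forces incompatible $1$-factorizations of $K_6$ at different vertices and produces a contradiction.
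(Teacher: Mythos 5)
Your proposal is correct, and the CIS half runs essentially parallel to the paper's: both arguments rest on showing that every maximal independent set of $J(7,3,1)$ has size $5$ (your two types --- the star through a fixed pair and $\binom{A}{3}\cup\{[7]\setminus A\}$ --- are exactly the two families the paper finds) and that every maximal clique is a Fano plane of size $7$; you then conclude via Proposition~\ref{prop:CIS} and Theorem~\ref{thm:strongclique}, while the paper cites the characterization of vertex-transitive CIS graphs from \cite{MR3278773}, which is an interchangeable step. Be aware, though, that your ``a short verification shows that every clique of size at most $6$ extends to a Fano plane'' is precisely the content the paper proves in detail (forcing $167$ into any maximal clique containing $123,145$ and then analyzing the eight transversals), so in a full write-up that verification cannot be waved away. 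The genuine divergence is in the non-localizability half: the paper stays self-contained, showing the clique graph $\Gamma_{\mathcal{Q}}$ has $30$ vertices, is vertex-transitive, and contains a $7$-clique (an orbit under a $7$-cycle), whence $\alpha(\Gamma_{\mathcal{Q}})\leq 4<5=\alpha(\Gamma)$ and Lemma~\ref{lem:cliquegraph} applies; you instead reduce localizability to the existence of five pairwise disjoint Fano planes partitioning $\binom{[7]}{3}$, i.e.\ a large set of Steiner triple systems of order $7$, and invoke Cayley's classical theorem that no such large set exists. Your route is valid and in fact yields a stronger fact (at most two pairwise disjoint Fano planes exist), but it imports an external result not cited in the paper, and your fallback self-contained argument via incompatible $1$-factorizations of $K_6$ is only a sketch whose details you would still have to supply; the paper's clique-graph counting buys a short, entirely internal proof at the cost of proving only the bound $\alpha(\Gamma_{\mathcal{Q}})\leq 4$, which is all that is needed.
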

\begin{proof}
It is clear that $\Gamma=J(7,3,1)$ is vertex-transitive, since $S_7$ acts transitively on its vertex-set.
Let $I$ be a maximal independent set in $\Gamma$. Suppose first that there exist two elements of $I$ which are disjoint. Without loss of generality, we may assume that these two vertices are $123$ and $456$. 
Let $xyz$ be another element of $I$. It is clear that $7\in \{x,y,z\}$, since otherwise, $xyz$ would be adjacent to one of $123$ and $456$. Let $z=7$. Since $xyz$ is not adjacent to $123$, it follows that $\{x,y\}\subset \{1,2,3\}$ or $\{x,y\}\cap  \{1,2,3\} =\emptyset$. It is now easy to see that $I$ has to be one of $\{123,456,127,137,237\}$ or $\{123,456,457,467,567\}$.

Suppose now that no two elements of $I$ are disjoint. Let $123$ and $124$ be two elements of $I$. It is not difficult to verify that the only maximal independent set satisfying this assumption is $I=\{123,124,125,126,127\}$. This shows that every maximal independent set in $\Gamma$ is of size $5$. 

Let $C$ be a maximal clique in $\Gamma$. Without loss of generality, we may assume that $123,145 \in C$. We claim that $167\in C$. Let $xyz\in C$  with $1\not \in \{x,y,z\}$. It follows that $|\{x,y,z\} \cap \{2,3\}|=1$ and $| \{x,y,z\} \cap \{4,5\} |=1$, which implies that also  $| \{x,y,z\} \cap \{6,7\} |=1$, which shows that $xyz$ is adjacent to $167$. This shows that every vertex of $C$ different from $167$ is adjacent to $167$, and since $C$ is maximal clique, we have $167\in C$.

Observe now that each of the 8 vertices
$xyz$ with $x\in \{2,3\}$, $y\in \{4,5\}$, $z \in \{6,7\}$ is adjacent with each of $123$, $145$, and $167$. 
It is now easy to see that there are two possibilities for $C$, namely $C_1=\{123,145,167, 246, 257, 347, 356\}$ and $C_2=\{123,145,167, 247, 256, 346, 357\}$. This shows that every maximal clique of $\Gamma$ is of size $7$ and moreover, each edge belongs to exactly $2$ maximal cliques. By \cite[Theorem 3.1]{MR3278773} it follows that $\Gamma$ is CIS graph. 

Since each edge of $\Gamma$ belongs to two maximal cliques, it follows that there are $30$ maximal cliques in $\Gamma$.
Observe that permutation $(6,7)\in S_7$ interchanges the two cliques $C_1$ and $C_2$. Combining this with the fact that $S_7$ acts transitively on the edge set of $\Gamma$ implies that $S_7$ acts transitively on the set of maximal cliques of $\Gamma$. Therefore, the clique graph $\Gamma_{\mathcal{Q}} $ of $\Gamma$ is vertex-transitive of order $30$.

Let $g=(1~2~3~4~5~6~7)\in S_7$ and let $\mathcal{C}={\{C_1^{(g^i)}:i=0,\ldots,6\}}$, that is $\mathcal{C}$ is an orbit of $\langle g \rangle$ containing $C_1$. Since $\langle g \rangle$ is of order $7$, its orbit can have size $1$ or $7$. Moreover, since $C_1^g\neq C_1$, it follows that  $\mathcal{C}$ is of size $7$.
We claim that any two elements of $\mathcal{C}$ have non-empty intersection. Observe that $347^g=145$, $167^{g^2}=123$, $246^{g^3}=572$, $257^{g^4}=246$, $123^{g^5}=167$ and $145^{g^6}=347$. This shows that $C_1$ has non-empty intersection with each element of $\mathcal{C}$, and since $\mathcal{C}$ is $\langle g \rangle$ orbit of $C_1$, it follows that any two elements of $\mathcal{C}$ have non-empty intersection. This shows that $\mathcal{C}$ is a clique of size $7$ in $\Gamma_{\mathcal{Q}}$. Therefore, $\omega(\Gamma_{\mathcal{Q}})\geq 7$. Since $\Gamma_{\mathcal{Q}}$ is vertex-transitive, it follows that $\alpha(\Gamma_{\mathcal{Q}})\cdot \omega(\Gamma_{\mathcal{Q}})\leq 30$, and consequently, $\alpha(\Gamma_{\mathcal{Q}})\leq 4$. Since $\alpha(\Gamma)=5$, by Lemma~\ref{lem:cliquegraph} it follows that $\Gamma$ is not localizable. 
\end{proof}
The following corollary gives a negative answer to Question~\ref{que:6.2}.
\begin{corollary}
The complement $\Gamma$ of generalized Johnson graph $J(7,3,1)$ is vertex-transitive CIS graph that does not admit a decomposition of its vertex set into $\omega(\Gamma)$ independent sets. 
\end{corollary}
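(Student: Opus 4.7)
The plan is to derive this as an immediate consequence of Theorem~\ref{thm:J(7,3,1) is CIS not localizable}, using the duality between a graph and its complement sketched informally in the paragraph preceding Question~\ref{que:6.2}. The strategy is to make that equivalence precise and then invoke the non-localizability of $J(7,3,1)$ to obtain the desired conclusion by contradiction.

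First I would verify vertex-transitivity: since $\Aut(\Gamma)=\Aut(\overline{\Gamma})$ for every graph, the complement $\Gamma$ of $J(7,3,1)$ inherits vertex-transitivity from $J(7,3,1)$. Next I would check that $\Gamma$ is CIS. Maximal cliques of $\overline{\Gamma}$ are exactly the maximal independent sets of $\Gamma$, and conversely; under this bijection, strong cliques of $\overline{\Gamma}$ correspond to strong independent sets of $\Gamma$ and vice versa. Applying this to $\overline{\Gamma}=J(7,3,1)$, which is CIS by Theorem~\ref{thm:J(7,3,1) is CIS not localizable}, shows that every maximal clique and every maximal independent set of $\Gamma$ is strong, so $\Gamma$ is CIS.

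Finally I would rule out any decomposition of $V(\Gamma)$ into $\omega(\Gamma)$ independent sets. Suppose, for contradiction, that such a partition $V(\Gamma)=J_1\cup\cdots\cup J_{\omega(\Gamma)}$ exists. Taking complements, the $J_i$ become pairwise disjoint cliques of $J(7,3,1)$, and $\omega(\Gamma)=\alpha(J(7,3,1))$. Since $J(7,3,1)$ is CIS we have $|V(J(7,3,1))|=\alpha(J(7,3,1))\cdot\omega(J(7,3,1))$, and because each $J_i$ has size at most $\omega(J(7,3,1))$ while the sizes sum to this product, every $J_i$ must have size exactly $\omega(J(7,3,1))$ and is therefore a maximum, hence maximal, clique. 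Because $J(7,3,1)$ is CIS, each $J_i$ is a strong clique, so $\{J_1,\ldots,J_{\omega(\Gamma)}\}$ is a partition of $V(J(7,3,1))$ into strong cliques, witnessing localizability of $J(7,3,1)$ and contradicting Theorem~\ref{thm:J(7,3,1) is CIS not localizable}.

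The main obstacle was really resolved inside Theorem~\ref{thm:J(7,3,1) is CIS not localizable} itself; what remains for the corollary is only careful bookkeeping of the duality between $\Gamma$ and $\overline{\Gamma}$, so no substantial technical difficulty is expected.
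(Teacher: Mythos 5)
Your proposal is correct and matches the paper's (implicit) argument: the paper derives this corollary directly from Theorem~\ref{thm:J(7,3,1) is CIS not localizable} together with the complementation duality spelled out right after Question~\ref{que:6.2}, which is exactly the bookkeeping you carry out. One small point of precision: the identity $|V(J(7,3,1))|=\alpha\cdot\omega$ follows from CIS-ness only because the graph is also vertex-transitive (via \cite[Theorem 3.1]{MR3278773}), or alternatively from the explicit counts $\alpha=5$, $\omega=7$, $|V|=35$ established in the theorem's proof, so your size argument goes through as claimed.
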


\section{Small valent vertex-transitive graphs admitting a strong clique}
\label{sec:Small valency}

In this section we study existence of strong cliques in vertex-transitive graphs of small valency. 
The only connected graphs of valency 2 are cycles, and by Proposition~\ref{prop:trianglefree-regular} it follows that the only cycles that admit a strong clique are $C_3$ and $C_4$. The next step is to study existence of strong cliques in cubic vertex-transitive graphs. 
In \cite[Theorem 6.8]{HMR-new} it is proved that the only cubic graphs in which each vertex belongs to a strong clique are $K_4$, $K_{3,3}$, $\overline{C_6}$ and an infinite family of graphs denoted by $F_n$ in \cite{HMR-new}. Classification of cubic vertex-transitive graphs admitting a strong clique is now easily derived from \cite[Theorem 6.8]{HMR-new}. Namely, one just needs to check which of the graphs given in \cite[Theorem 6.8 (3.)]{HMR-new} are vertex-transitive. It is easily seen that none of the graphs $F_n$ is vertex-transitive, since there are two types of vertices, those that belong to 4-cycles, and those that don't belong to $4$-cycles.
Hence the following theorem is a direct consequence of \cite[Theorem 6.8]{HMR-new}.

\begin{theorem}\label{thm:VT-3valent}
Let $\Gamma$ be a connected cubic vertex-transitive graph. Then, the following statements are equaivalent:
\begin{enumerate}
\item There exists a strong clique in $\Gamma$;
\item $\Gamma$ is localizable;
\item $\Gamma$ is isomorphic to one of the graphs $K_4$, $K_{3,3}$ or $\overline{C_6}$.
\end{enumerate}
\end{theorem}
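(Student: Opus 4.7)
The plan is to prove the chain $(3) \Rightarrow (2) \Rightarrow (1) \Rightarrow (3)$, with the last implication being the substantive one and essentially a direct application of \cite[Theorem 6.8]{HMR-new}.

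For $(3) \Rightarrow (2)$, I would exhibit a partition into strong cliques for each of the three graphs. $K_4$ is itself a single strong clique. In $\overline{C_6}$, the two triangles formed by the odd-indexed and even-indexed vertices partition $V(\Gamma)$; each is a maximum clique of size $3$, and since every maximal independent set of $\overline{C_6}$ is an edge of $C_6$ and therefore has size $2$, Theorem~\ref{thm:strongclique} gives $|C||I| = 6 = |V(\Gamma)|$, so both triangles are strong. In $K_{3,3}$, any perfect matching partitions $V(\Gamma)$ into edges, each of which is a strong clique by Proposition~\ref{prop:trianglefree-regular}. The implication $(2) \Rightarrow (1)$ is immediate from the definition of a localizable graph.

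For $(1) \Rightarrow (3)$, the key step is to upgrade the hypothesis from ``some strong clique exists'' to ``every vertex lies in a strong clique''. If $C$ is a strong clique in $\Gamma$ and $v \in V(\Gamma)$ is arbitrary, then vertex-transitivity supplies $\varphi \in \Aut(\Gamma)$ sending some element of $C$ to $v$; since automorphisms preserve strong cliques, $\varphi(C)$ is a strong clique containing $v$. With this upgrade, \cite[Theorem 6.8]{HMR-new} applies and yields $\Gamma \cong K_4$, $K_{3,3}$, $\overline{C_6}$, or $\Gamma \cong F_n$ for some $n$; to finish it remains to rule out the family $F_n$.

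The only real verification is therefore that no $F_n$ is vertex-transitive. As the excerpt indicates, one checks from the explicit description of $F_n$ in \cite{HMR-new} that some vertices of $F_n$ lie on a $4$-cycle while others do not, so these two types form distinct orbits under $\Aut(F_n)$ and preclude vertex-transitivity. This is a routine finite inspection and the only place where one must unpack the definition of $F_n$; the main (and rather mild) obstacle is simply locating a $4$-cycle and a vertex avoiding every $4$-cycle in the $F_n$ construction, after which the bulk of the proof is bookkeeping of what \cite[Theorem 6.8]{HMR-new} already provides.
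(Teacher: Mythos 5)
Your proposal is correct and follows essentially the same route as the paper: both deduce $(1)\Rightarrow(3)$ by using vertex-transitivity to place every vertex in a strong clique, invoking \cite[Theorem 6.8]{HMR-new}, and excluding the family $F_n$ via the two vertex types (on a $4$-cycle versus not). Your explicit verification of $(3)\Rightarrow(2)$ via Theorem~\ref{thm:strongclique} and Proposition~\ref{prop:trianglefree-regular} is simply the routine check the paper leaves implicit.
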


\subsection{Valency $4$}

In this section we classify all connected 4-valent vertex-transitive graphs admitting a strong clique. Before stating the result, we need one definition.

\begin{definition}
Let $n\geq 2$ be a positive integer. With $H_n$ we denote the graph of order $4n$ with vertex set $V(H_n)=\{x_i,y_i,z_i,w_i\mid i \in \{1,\ldots,n\}\}$ and   adjacencies are defined as follows: for every $i$, vertices $x_i,y_i,z_i$ and $w_i$ are adjacent, and for $i=1,\ldots n-2$ we have $z_i\sim x_{i+1}$, $w_i\sim y_{i+1}$ and $x_1\sim z_n$ and $y_1\sim w_n$ (see Figure~\ref{fig:H_4} for an example).
\end{definition}

With $\Gamma_1\square \Gamma_2$ we denote the {\em Cartesian product} of graphs $\Gamma_1$ and $\Gamma_2$, that is the graph with vertex set $V(\Gamma_1)\times V(\Gamma_2)$, where two vertices $(x_1,y_1)$ and $(x_2,y_2)$ are adjacent if and only if $x_1=x_2$ and $\{y_1,y_2\}\in E(\Gamma_2)$, or $y_1=y_2$ and $\{x_1,x_2\}\in E(\Gamma_1)$.

\begin{theorem}\label{thm:4valent-main}
A connected 4-valent vertex-transitive graph admits a strong clique if and only if it is isomorphic to one of the graphs $K_{4,4}$, $K_5$, $K_3[2K_1]$, $L(K_{3,3})$, $H_n$, $Cay(\mathbb{Z}_{3k},\{\pm 1, \pm k\})$ or $C_3\square C_n$. In particular, every 4-valent vertex-transitive graph admitting a strong clique is localizable.
\end{theorem}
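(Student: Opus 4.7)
The plan is a case analysis on $\omega=\omega(\Gamma)$, which by Corollary~\ref{cor:strongclique} equals the size of any strong clique. Since $\Gamma$ is 4-regular, $\omega\in\{2,3,4,5\}$. The extremes are immediate: if $\omega=5$, a $K_5$ exhausts the neighborhood of each of its vertices, so connectivity forces $\Gamma=K_5$; if $\omega=2$, Proposition~\ref{prop:trianglefree-regular} yields $\Gamma=K_{4,4}$. I would also dispose of the reducible case up front using the structure theorem $\Gamma\cong\Gamma'[nK_1]$: the 4-regular reducible vertex-transitive graphs are quickly enumerated, and those admitting a strong clique reduce to $K_{4,4}$ and $K_3[2K_1]$.

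For $\omega=4$, fix a strong clique $C=\{v_1,\ldots,v_4\}$; each $v_i$ has a unique neighbor $v_i'\notin C$. In the irreducible case, Lemma~\ref{lem:irreducible} together with a valency count shows that each vertex lies in a unique $K_4$, so the $K_4$'s partition $V(\Gamma)$. Lemma~\ref{lem:dominates} applied to $\{v_1',v_2',v_3',v_4'\}$ (which would dominate $C$ if independent) forces some pair of external neighbors to be adjacent, hence to lie together in a neighboring $K_4$. A refinement shows the four external neighbors split into two adjacent pairs, each pair lying in one of two neighboring $K_4$'s, with cross-edges arranged exactly as in $H_n$. Iterating around the resulting cycle of $K_4$'s and invoking connectivity identifies $\Gamma\cong H_n$.

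For $\omega=3$, each vertex is in a triangle so the local graph $\Gamma(v)$ (on $4$ vertices) has at least one edge; it is also triangle-free, else a $K_4$ arises. Hence $\Gamma(v)$ is one of $K_2\cup 2K_1$, $2K_2$, $C_4$, $P_3\cup K_1$, $P_4$, $K_{1,3}$. The last three are ruled out by common-neighbor counts: for example, $K_{1,3}$ forces the three vertices of a strong $K_3$ to share the same pair of external neighbors, producing a forbidden $K_4$; $P_4$ and $P_3\cup K_1$ similarly yield either a $K_4$ or an independent set disjoint from $C$ dominating $C$, contradicting Lemma~\ref{lem:dominates}. The surviving cases give the remaining graphs: $C_4$ yields $K_3[2K_1]$ (each edge lies in two triangles and an octahedral identification follows); $2K_2$ yields $L(K_{3,3})$ (each vertex lies in two triangles sharing only that vertex, forcing the $3\times 3$ rook structure); and $K_2\cup 2K_1$ yields either $C_3\square C_n$ or $Cay(\mathbb{Z}_{3k},\{\pm 1,\pm k\})$, since each vertex lies in a unique triangle, the triangles partition $V(\Gamma)$, and the remaining cross-edges form a 2-regular structure that closes up globally in exactly one of these two patterns.

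The main obstacle is the final subcase of $\omega=3$ with local graph $K_2\cup 2K_1$: one must examine how the 2-regular cross-triangle pattern closes up globally to distinguish the ``Cartesian'' closure giving $C_3\square C_n$ from the ``diagonal'' closure giving the circulant $Cay(\mathbb{Z}_{3k},\{\pm 1,\pm k\})$, and show that no other closure is consistent with vertex-transitivity. For the ``in particular'' claim, each of the listed graphs admits an explicit partition of its vertex set into maximum cliques (the layers of $H_n$; the cosets $\{i,i+k,i+2k\}$ in the circulant; the $C_3$-fibers of $C_3\square C_n$; the rows of $L(K_{3,3})$; the two triangles in $K_3[2K_1]$; a perfect matching in $K_{4,4}$; and $K_5$ itself), and each block is a strong clique by Theorem~\ref{thm:strongclique}, so every such $\Gamma$ is localizable.
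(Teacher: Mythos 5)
Your outline follows the same skeleton as the paper (case analysis on the size of the strong clique, with $|C|=2,5$ dispatched identically and the reducible graphs peeled off), and your $\omega=3$ organization by local graphs is a legitimate alternative to the paper's split by the number of strong cliques per vertex; the exclusions of $P_3\cup K_1$, $P_4$, $K_{1,3}$, $C_4$ are indeed easy, though the cleanest mechanism is not a "forbidden $K_4$" (none arises from $K_{1,3}$) but Lemma~\ref{lem:irreducible}: in an irreducible graph a strong triangle cannot share an edge with another maximal triangle, which kills all four of these local graphs at once. The problem is that at each of the three decisive points your argument is an assertion rather than a proof. In the $\omega=4$ case, Lemma~\ref{lem:dominates} only gives \emph{one} adjacent pair among the four external neighbours; the claim that they split into \emph{two} adjacent pairs (your "a refinement shows") is exactly where vertex-transitivity must be used in a nontrivial way. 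The paper does this by taking the setwise stabilizer of the strong clique, extracting a regular subgroup $\mathbb{Z}_4$ or $\mathbb{Z}_2\times\mathbb{Z}_2$, and checking that every induced permutation carries the known edge $\{x,y\}$ to $\{z,w\}$; an alternative is to observe that, since the $K_4$'s partition $V(\Gamma)$ and are blocks, the number of edges between a vertex's clique and the clique its external edge enters is an invariant constant over all vertices, forcing bond multiplicities $(2,2)$ or $(4)$. Some such argument must be supplied.

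The two remaining gaps are in the $\omega=3$ case. For local graph $2K_2$ you assert that "the $3\times 3$ rook structure" is forced; this is true but is a genuine theorem-sized step: the paper avoids proving it directly by noting that a vertex lying in two strong triangles makes every maximal clique strong, so $\Gamma$ is CIS, and then invoking the classification of vertex-transitive CIS graphs of valency at most $7$ from \cite{MR3278773} to get $L(K_{3,3})$; a self-contained proof along your lines would need Whitney's theorem (locally $2K_2$ $4$-regular graphs are line graphs of triangle-free cubic graphs) plus the fact that only $K_{3,3}$ among such graphs has a vertex met by every maximal matching (essentially the classification of randomly matchable graphs). Finally, for local graph $K_2\cup 2K_1$ you state the dichotomy "Cartesian closure $C_3\square C_n$ versus diagonal closure $Cay(\mathbb{Z}_{3k},\{\pm1,\pm k\})$" and explicitly defer showing that no other closure occurs—but that is the bulk of the paper's proof: it colours edges blue/red according to membership in a strong clique, analyses whether a blue edge can join two vertices of the same red cycle (using $1$-step rotations, parity of chord lengths, and dihedral actions to land on the circulant, and ruling out the odd-chord matching configuration), and otherwise shows the blue triangles tie together three red cycles vertex-by-vertex to give $C_3\square C_n$. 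Both strongness of the clique and vertex-transitivity are needed there; merely noting that the triangles partition $V(\Gamma)$ and the red edges are $2$-regular does not exclude other closures. As it stands, the proposal is a correct roadmap with the hardest thirds of the argument left unproved.
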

\begin{proof}
It is straightforward to verify that each of the graphs $K_{4,4}$, $K_5$, $K_3[2K_1]$, $L(K_{3,3})$, $H_n$, $Cay(\mathbb{Z}_{3k},\{\pm 1, \pm k\})$ or $C_3\square C_n$ admits a strong clique and is localizable, so we omit the details.

Let $\Gamma$ be a connected 4-valent vertex-transitive graph and let $C_1$ be a strong clique in $\Gamma$. If $|C_1|=2$, then by Proposition~\ref{prop:trianglefree-regular} it follows that $\Gamma\cong K_{4,4}$. If $|C_1|=5$, then since $\Gamma$ is connected and $4$-valent, it follows that $\Gamma\cong K_5$. 

Suppose now that $|C_1|=4$ and let $C_1=\{x_1,y_1,z_1,w_1\}$. Since $\Gamma$ is 4-valent, each vertex of $C_1$ has another neighbour lying outside of $C_1$.
We claim that no two vertices of $C_1$ share a common neighbour outside of $C_1$. Suppose contrary, that $u\not \in C_1$ is a common neighbour of $x_1$ and $y_1$. Then $x_1$ and $y_1$ have the same closed neighbourhoods in $\Gamma$. It follows that $\Gamma\cong \Gamma'[K_t]$, where $t\geq 2$. Considering the valency of $\Gamma'[K_t]$ it follows that $t\cdot  val(\Gamma')+t-1=4$. Therefore, $t\cdot  (val(\Gamma')+1)=5$, and since $t\geq 2$, the only solution is $t=5$ and $val(\Gamma')=0$, which is impossible, since $\Gamma$ would be disconnected in this case. This proves our claim that no two vertices of $C_1$ share a common neighbour outside of $C_1$. 

Let $x$, $y$, $z$ and $w$ be the remaining neighbours of $x_1,y_1,z_1$ and $w_1$, respectively. If $\{x,y,z,w\}$ is an independent set in $\Gamma$, then Lemma~\ref{lem:dominates} implies that $C_1$ is not a strong clique. We may without loss of generality assume that $x$ is adjacent with $y$. Observe that $\{x,x_1\}$ is a maximal clique in $\Gamma$, and since it is not maximum, by Theorem~\ref{thm:strongclique} it is not strong clique in $\Gamma$. Therefore, $x_1$ lies in a unique strong clique, and exactly three edges incident with $x_1$ lie in a strong clique. Since $\Gamma$ is vertex-transitive, the same must hold for every vertex. 
We claim that $w$ and $z$ are adjacent. Consider the subgroup of $Aut(\Gamma)$ that fixes $C_1$ setwise in its induced action on $C_1$. It is transitive group of degree $4$, hence it admits a regular subgroup $H$ isomorphic to $\ZZ_2\times \ZZ_2$ or $\ZZ_4$. Therefore, there exists an automorphism $\varphi$ of $\Gamma$ which fixes $C_1$ setwise, maps $x_1$ to $w_1$ and acts semiregularly on $C_1$. Then it is easy to see that $\varphi$ induces one of the following three permutations of $C_1$: $(x_1 \, w_1)(y_1\, z_1)$, $ (x_1 \, w_1\,y_1\, z_1)$ or $ (x_1 \, w_1\,z_1\, y_1)$. 
Since $\{x,x_1\}$,  $\{y,y_1\}$, $\{z,z_1\}$ and $\{w,w_1\}$ are the unique maximal cliques of size 2 containing $x_1,y_1,z_1$ and $w_1$, respectively, then it follows that $\varphi$ fixes also the set $\{x,y,z,w\}$, and induces one of the following three permutations $(x \, w)(y\, z)$, $ (x \, w\,y\, z)$ or $ (x \, w\,z\, y)$. In each of the three cases we obtain that edge $\{x,y\}$ is mapped to $\{z,w\}$, either by $\varphi$ or $\varphi^2$, which proves our claim that $z$ and $w$ are adjacent.

Let $C_2$ be the unique strong clique containing $z$. Since the edge $\{z,z_1\}$ doesn't lie in a strong clique, it follows that $w\in C_2$. 
Since edge $\{x,y\}$ lies in a strong clique, it follows that vertices $x$ and $y$ belong to the same strong clique.
If $C_2=\{x,y,z,w\}$, then since $\Gamma$ is connected, it follows that $\Gamma$ is isomorphic to graph $H_2$, and if 
 $C_2\neq \{x,y,z,w\}$ then $C_2\cap \{x,y,z,w\}=\{z,w\}$.

 \begin{figure}
\begin{center}
\includegraphics[scale=0.9]{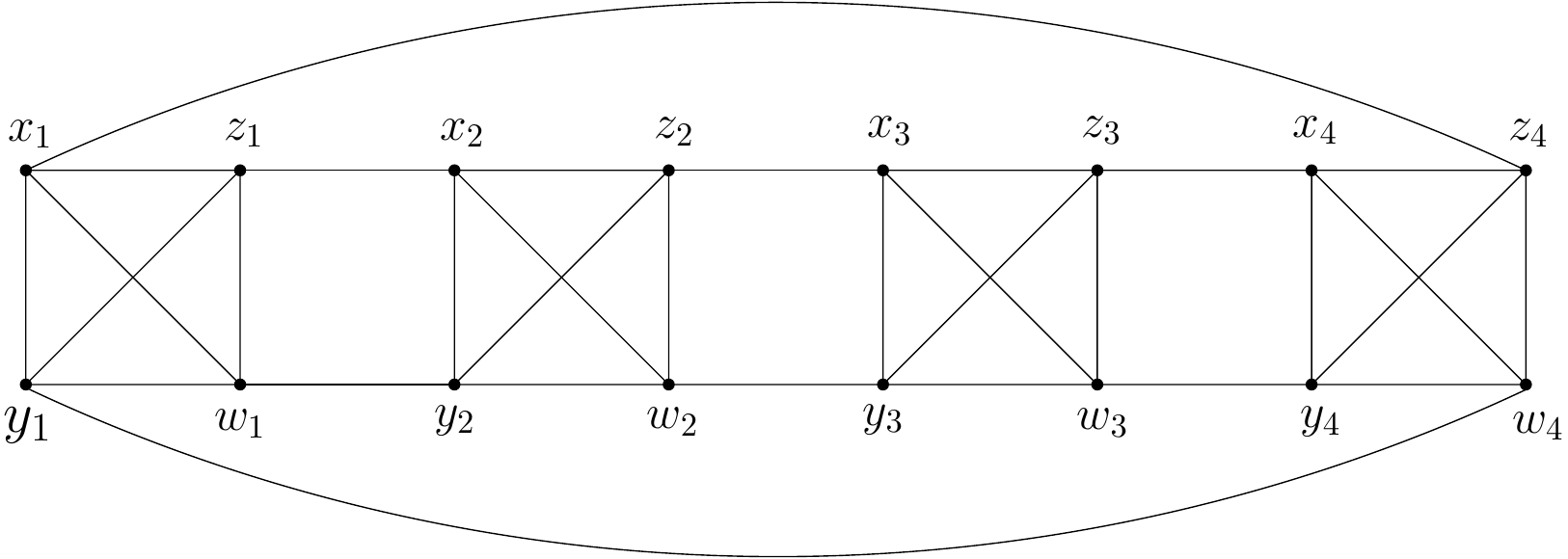} 
\caption{Graph $H_4$}
\label{fig:H_4}
\end{center}
\end{figure}

To simplify the notation, we denote $z$ with $x_2$, $w$ with $y_2$, and the remaining two vertices of $C_2$ with $z_2$ and $w_2$. 
Let $x_3$ and $y_3$ be the remaining neighbours of $z_2$ and $w_2$, respectively. 
Since each vertex lies in a unique strong clique, and by the arguments from the previous paragraphs, it follows that $x_3$ and $y_3$ are adjacent and belong to the same strong clique. Let $C_3$ be the strong clique containing $x_3$ and $y_3$.
If $C_3$ contains one of $x$ and $y$, then it must contain both of them, that is $C_3=\{x_3,y_3,x,y\}$ which implies that $\Gamma\cong H_3$ and we are done.  Therefore, we may assume that $C_3\cap \{x,y\}=\emptyset$. Let $z_3$ and $w_3$ be the remaining vertices of $C_3$, and let $x_4$ and $y_4$ be their remaining neighbours, respectively. Then using the same arguments as before, $x_4$ and $y_4$ are adjacent and belong to strong clique $C_4$. If $C_4=\{x_4,y_4,x,y\}$ then $\Gamma\cong H_4$, otherwise we repeat this procedure. Since $\Gamma$ is finite, this process has to terminate after finitely many steps. Hence $\Gamma\cong H_n$, for some natural number $n$.

{\bf Case $|C_1|=3$}~

Finally suppose that $|C_1|=3$. Suppose first that $\Gamma$ is not irreducible. Then $\Gamma \cong \Gamma'[tK_1]$. Since $\Gamma$ is 4-valent, it is easy to see that $t=2$ and $\Gamma\cong K_3[2K_1]$. We will now assume that $\Gamma$ is irreducible. By Lemma~\ref{lem:irreducible} it follows that two strong cliques are either disjoint or intersect in a single vertex. This implies that each vertex belongs to 1 or 2 strong cliques. Suppose first that some vertex $u$ belongs to 2 strong cliques. Then every edge incident with $u$ belongs to a strong clique. Hence, every maximal clique containing $u$ is strong, and since $\Gamma$ is vertex-transitive, it follows that all maximal cliques in $\Gamma$ are strong. Therefore, $\Gamma$ is CIS graph and by \cite[Corollary 5.6]{MR3278773} it follows that $\Gamma\cong L(K_{3,3})$. 
 
 Now we suppose that each vertex of $\Gamma$ belongs to a unique strong clique. We colour an edge of $\Gamma$ {\it blue} if it belongs to a strong clique, and {\it red} otherwise. It is clear that each vertex is incident with $2$ red and $2$ blue edges. It is also clear that automorphisms of $\Gamma$ preserve such defined colours of edges.

Let $v_0,\ldots ,v_{n-1},v_0$ be a cycle  induced by the red edges  of $\Gamma$.
Suppose that there exists a blue edge joining two vertices of this red cycle. Without loss of generality, assume that $v_0$ is adjacent to $v_{k}$ via blue edge. 
Suppose first that there exists automorphism of $\Gamma$ acting as a $1$-step rotation of the cycle $v_0,\ldots,v_{n-1},v_0$. Without loss of generality we may assume that this rotation maps $v_i$ to $v_{i+1}$. It follows that $v_0,v_{k},v_{2k}$ has to induce a triangle. This implies that $n=3k$, and since $\Gamma$ is connected and $4$-valent, there are no other vertices of $\Gamma$. Therefore, we conclude that $\Gamma\cong Cay(\mathbb{Z}_{3k},\{\pm 1, \pm k\})$.

Suppose now that there is no permutation acting as a $1$-step rotation of this red cycle. Then $n$ must be even, and there exists dihedral subgroup acting regularly on $\{v_0,\ldots ,v_{n-1}\}$, consisting of even-step rotations and reflections without fixed points. If $k$ is even, then since there exists automorphism of $\Gamma$ that acts on $\{v_0,\ldots ,v_{n-1}\}$ as $k$-step rotation, it follows that $v_0,v_k,v_{2k}$ is triangle, and hence $n=3k$. Then for every $i$, it follows that $v_{2i},v_{2i+k},v_{2i+2k}$ is triangle. Now using reflections, it follows that for every $j$ we have triangles of the form $v_{j},v_{j+k},v_{j+2k}$. Since $\Gamma$ is $4$-valent and connected, it follows that $\Gamma\cong Cay(\mathbb{Z}_{3k},\{\pm 1, \pm k\})$. 

Suppose next that for every blue edge $\{v_i,v_j\}$ we have that $i-j$ is odd. This implies that each of the vertices $v_0,\ldots,v_{n-1}$ is adjacent to exactly one blue edge with another vertex from the same red cycle, since otherwise we would obtain triangle $\{v_i,v_j,v_t\}$ and it is impossible that each of the numbers  $i-j$, $j-t$ and $t-i$ is odd. Hence there are exactly $n/2$ blue edges in the subgraph of $\Gamma$ induced by $v_0,\ldots,v_{n-1}$ and they induce a perfect matching. The same must hold for any other cycle induced by the red edges. However, this is impossible, since distinct red cycles are vertex disjoint and blue edges must form triangles.

Finally assume that there is no blue edge joining two vertices on the same red cycle. Suppose that $v_1$, $u_1$ and $z_1$ form a blue triangle. Let $u_1,\ldots,u_n,u_1$ and $z_1,\ldots,z_n,z_1$ be the red cycles containing $u_1$ and $z_1$ respectively. Since $\{v_1,u_1,z_1\}$ is strong clique in $\Gamma$ it follows that none of the sets $\{v_j,u_k,z_l\}$ is independent, where $j,k,l\in \{2,n\}$. Let $H$ be the subgraph of $\Gamma$ induced by the vertices $v_2,u_2,z_2,v_n,u_n,z_n$. We claim that no vertex is isolated in $H$. Suppose contrary, that $v_2$ is isolated in $H$. Then based on the above observation, it follows that $u_2$ is adjacent with both $z_2$ and $z_n$. However, since the blue edges induce triangles, it would imply that $z_2$ and $z_n$ are adjacent, contrary to the above assumption that no blue edge joins two vertices on the same red cycle. This proves our claim that no vertex of $H$ is isolated. Next we claim that no vertex of $H$ has degree $1$. Suppose contrary, that $v_2$ has degree $1$ in $H$, and without loss of generality, that $u_2$ is its neighbour. Then $u_n$ has to be adjacent with both $z_2$ and $z_n$, and consequently $z_2$ and $z_n$ are adjacent, contrary to the above assumption. This proves our claim that no vertex of $H$ is of degree $1$. Since each vertex of $\Gamma$ has exactly two blue edges incident with it, and since no red edge of $\Gamma$ lies in $H$, it follows that each vertex of $H$ has degree $2$. Since each blue edge lies in a triangle, it follows that $H$ is a disjoint union of $2$ triangles. Without loss of generality, we may assume that $\{v_2,u_2,z_2\}$ and $\{v_n,u_n,z_n\}$ are the two triangles. Now applying the same arguments, it follows that $\{v_i,u_i,z_i\}$ induces a triangle, for each $i$. By the connectedness of $\Gamma$ it follows that $V(\Gamma)=\{v_i,u_i,z_i\mid i\in \{1,n\}\}$. It is now easy to see that $\Gamma\cong Cay(\ZZ_n\times \ZZ_3, \{(\pm 1,0), (0,\pm 1)\})$. Observe that this graph is isomorphic to $C_3\square C_n$.
 \end{proof}

\subsection{Valency $5$}

The goal of this section is to give a characterization of $5$-valent vertex-transitive graphs admitting a strong clique. We first show that there is a unique example of a connected 5-valent vertex-transitive graph with given clique number $\omega\neq 4$.

\begin{proposition}\label{prop:5 valent omega=2,3,5,6}
Let $\Gamma$ be a connected $5$-valent vertex-transitive graph with $\omega(\Gamma)\neq 4$. Then $\Gamma$ admits a strong clique if and only if 
\begin{enumerate}[(i)]
\item $\omega(\Gamma)=2$ and $\Gamma \cong K_{5,5}$;
\item $\omega(\Gamma)=3$ and $\Gamma \cong Cay(\ZZ_{12},\{\pm 1, \pm 4, 6\})$;
\item $\omega(\Gamma)=5$ and $\Gamma \cong K_5\square K_2$;
\item $\omega(\Gamma)=6$ and $\Gamma \cong K_6$.
\end{enumerate}
\end{proposition}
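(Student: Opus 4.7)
The plan is to dispatch the four sub-cases separately, using Corollary~\ref{cor:strongclique}, Lemma~\ref{lem:irreducible}, Lemma~\ref{lem:dominates} and, in the case $\omega=5$, a transitivity argument on the setwise stabilizer of a strong clique.

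The extreme cases are immediate. If $\omega(\Gamma)=2$, every strong clique has size $2$ by Corollary~\ref{cor:strongclique}, so Proposition~\ref{prop:trianglefree-regular} with $m=5$ gives $\Gamma\cong K_{5,5}$. If $\omega(\Gamma)=6$, a $6$-clique is a connected $5$-regular subgraph on $6$ vertices, so by connectedness $\Gamma\cong K_6$.

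For $\omega(\Gamma)=5$, let $C=\{v_1,\ldots,v_5\}$ be a strong clique. I first check that $\Gamma$ is irreducible: a decomposition $\Gamma\cong\Gamma'[nK_1]$ with $n\geq 2$ would force $n\cdot\mathrm{val}(\Gamma')=5$ and $\omega(\Gamma')=5$, which has no solution. Each $v_i$ then has a unique neighbour $u_i$ outside $C$; Lemma~\ref{lem:irreducible} together with the valency bound $9-|C\cap C''|\le 5$ for any second strong clique $C''$ through a vertex of $C$ shows that every vertex lies in a \emph{unique} strong clique. Hence any automorphism sending $v\in C$ to $v'\in C$ must map $C$ to $C$, so $\Aut(\Gamma)_C$ is transitive on $C$ and permutes the partners $u_1,\ldots,u_5$ transitively. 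The multiset of strong cliques containing these partners must then consist of equally many copies of each clique, i.e.\ $5=k\cdot m$ with only the solutions $(k,m)=(1,5)$ and $(5,1)$. In the case $(5,1)$ the partners lie in five distinct strong cliques, so they are pairwise non-adjacent and form an independent set disjoint from $C$ dominating $C$, contradicting Lemma~\ref{lem:dominates}. Therefore $(k,m)=(1,5)$, the partners span a second $5$-clique $C'$, and connectedness yields $V(\Gamma)=C\cup C'$, giving $\Gamma\cong K_5\square K_2$.

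For $\omega(\Gamma)=3$, let $C=\{a,b,c\}$ be a strong triangle. Since $\omega=3$, the local graph $L$ at any vertex is a triangle-free graph on $5$ vertices, and vertex-transitivity fixes $L$ up to isomorphism; the edge $\{b,c\}\in E(L(a))$ guarantees at least one edge. I would enumerate the triangle-free graphs on $5$ vertices and, for each candidate, combine Lemma~\ref{lem:dominates} (an independent triple $\{a',b',c'\}$ with $a'\in N(a)\setminus C$ etc.\ would destroy strength of $C$) with the identity $|V(\Gamma)|=3\alpha(\Gamma)$ from Theorem~\ref{thm:strongclique} and with vertex-transitivity, to show only $L\cong K_2\cup 3K_1$ survives. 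In that case the outside neighbours of $a,b,c$ furnish nine distinct vertices, giving $|V(\Gamma)|\ge 12$; a refined domination argument together with the fact that the triangles (one per vertex) must partition $V(\Gamma)$ pins down $|V(\Gamma)|=12$ and the adjacencies between the four triangles, producing exactly $Cay(\ZZ_{12},\{\pm 1,\pm 4,6\})$.

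The main obstacle is the $\omega(\Gamma)=3$ step. The triangle-free graphs on $5$ vertices form a long list ($C_5$, $K_{2,3}$, $P_5$, $C_4\cup K_1$, $K_{1,4}$, $K_{1,3}\cup K_1$, $P_4\cup K_1$, $P_3\cup K_2$, $2K_2\cup K_1$, $P_3\cup 2K_1$, $K_2\cup 3K_1$), and each must be treated individually: vertex-transitivity propagates the local configuration to the whole graph, so many of the candidates can be eliminated only after tracing the implications through several neighbourhoods and combining the strong-clique/domination condition with the well-covered identity from Theorem~\ref{thm:strongclique}.
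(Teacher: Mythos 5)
Your handling of $\omega(\Gamma)=2$ and $\omega(\Gamma)=6$ is fine, and your $\omega(\Gamma)=5$ skeleton is close to the paper's, but it contains two unproved steps. Your irreducibility check only excludes $\Gamma\cong\Gamma'[nK_1]$ (equal \emph{open} neighbourhoods); it does not exclude two vertices of $C$ having equal \emph{closed} neighbourhoods, i.e.\ sharing their outside partner, which is the case $\Gamma\cong\Gamma'[K_t]$, $t\geq 2$. Until that is ruled out (it is, because $\omega(\Gamma'[K_t])=t\,\omega(\Gamma')=5$ forces $t=5$, while $5$-valency forces $t(\mathrm{val}(\Gamma')+1)=6$; the paper does this by showing the local graph is $K_4\cup K_1$), the partners $u_1,\dots,u_5$ need not be five distinct vertices, and your subcase $(k,m)=(1,5)$ cannot conclude that they span a second $5$-clique. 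Likewise, in subcase $(5,1)$ the assertion ``partners in distinct strong cliques are pairwise non-adjacent'' is not automatic: it needs the observation that the full neighbourhood of $u_1$ is $\{v_1\}\cup\bigl(D_1\setminus\{u_1\}\bigr)$ (unique $5$-clique through $u_1$, via Lemma~\ref{lem:irreducible}, plus valency), so that an adjacent partner would be forced into $D_1$, contradicting uniqueness of its own strong clique. Both points are fixable in a few lines, but as written they are gaps.

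The genuine gap is the case $\omega(\Gamma)=3$, which is the substantive core of the proposition and which you explicitly defer. ``Enumerate the triangle-free graphs on five vertices and show only $K_2\cup 3K_1$ survives'' is a plan, not a proof: several candidates do fall quickly to Lemma~\ref{lem:irreducible} (any two local-graph edges sharing a vertex give two triangles meeting in an edge), but the local graph $2K_2\cup K_1$ --- i.e.\ each vertex lying in two strong triangles --- cannot be dismissed by Lemma~\ref{lem:dominates} or by the identity $|V(\Gamma)|=3\alpha(\Gamma)$ from Theorem~\ref{thm:strongclique}; the paper eliminates it only after producing, via an orbit count on the set of triangles, an automorphism of order $3$ cyclically permuting a strong triangle $C=\{x,y,z\}$, deriving the triangles $\{x_i,y_i,z_i\}$, and then exhibiting the specific independent set $\{x_3,y_1,z_2\}$ that dominates $C$. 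Similarly, your final step ``a refined domination argument \dots pins down $|V(\Gamma)|=12$ and the adjacencies, producing exactly $Cay(\ZZ_{12},\{\pm 1,\pm 4,6\})$'' is precisely the page of adjacency-chasing (again driven by an order-$3$ automorphism and repeated use of strongness of $C$) that constitutes the paper's argument, and you have not carried it out. Until the $2K_2\cup K_1$ elimination and the identification of the unique surviving graph are actually done, the classification in item (ii) is unproved.
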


\begin{proof}
If $\omega(\Gamma)=2$, the result follows by Proposition~\ref{prop:trianglefree-regular}. If $\omega(\Gamma)=6$, then the result follows trivially. 
Suppose now that $\omega(\Gamma)=5$. 
Let $a$ be a vertex of $\Gamma$ and let $C=\{a,b,c,d,e\}$ be a strong clique containing $a$. Let $a_1$ be the remaining neighbour of $a$. If $a_1$ is not isolated vertex in the subgraph induced by $\Gamma(a)$, then there $a$ would have the same closed neighbourhood as one of $b,c,d,f$, which implies that $\Gamma=\Gamma'[K_t]$, for some $t\geq w$. However, since $\Gamma$ is $5$-valent and $\omega(\Gamma)=5$ it is easy to see that this is impossible. 
It follows that $\Gamma(a)$ is isomorphic to disjoint union of $K_4$ and $K_1$, hence every vertex belongs to unique clique of size $5$. Since every vertex belongs to unique strong clique, it follows that there exists a cyclic group of order $5$, fixing $C$ setwise, and acting transitively on the vertices of $C$. Without loss of generality, we may assume that there exists $\alpha\in Aut(\Gamma)$ whose restriction to $C$ is $5$-cycle $(a~b~c~d~e)$. Let $b_1=\alpha(a_1)$, $c_1=\alpha(b_2)$, $d_1=\alpha(c_1)$ and $e_1=\alpha(d_1)$. Since $\alpha$ is automorphism of $\Gamma$ it follows that $b\sim b_1$, $c\sim c_1$, $d\sim d_1$ and $e\sim e_1$. Since $C$ is strong clique, it follows that $\{a_1,b_1,c_1,d_1,e_1\}$ is not an independent set in $\Gamma$. Using the fact that the local graph at every vertex is isomorphic to disjoint union of $K_1$ and $K_4$, it follows that $\{a_1,b_1,c_1,d_1,e_1\}$ is a clique in $\Gamma$. Since $\Gamma$ is connected $5$-valent graph, it follows that the order of $\Gamma$ is 10, and it is now easy to see that $\Gamma\cong K_5\square K_2$.

Suppose now that $\omega(\Gamma)=3$ and let $C$ be a strong clique in $\Gamma$. Since $\omega(\Gamma)=3$, by Corollary~\ref{cor:strongclique} it follows that $C$ is maximum clique in $\Gamma$, hence $|C|=3$. Let $C=\{x,y,z\}$. If $\Gamma$ is not irreducible, then since it is vertex-transitive of valency $5$, $\Gamma$ would have to be isomorphic to $K_{5,5}$ which is impossible, since $\omega(\Gamma)=3$. Therefore, $\Gamma$ is irreducible. By Lemma~\ref{lem:irreducible} it follows that each vertex belongs to one or two strong cliques. 

Suppose first that each vertex belongs to two strong cliques.
Let $\alpha(\Gamma)=n$. By Theorem~\ref{thm:strongclique} it follows that $|V(\Gamma)|=3n$.  Let $T$ denote the set of all triangles of $\Gamma$.
Since each vertex belongs to two strong cliques, it follows $|T|=2n$. 
Let $G\leq \Aut(\Gamma)$ be a vertex-transitive group of automorphisms and consider the induced action of $G$ on $T$. Since each vertex of $\Gamma$ belongs to two triangles, and $G$ acts transitively on $V(\Gamma)$ it follows that either $G$ acts transitively on $T$, or it has two orbits, each of size $n$. If $G$ acts transitively on $T$, then $2n=|T|\mid |G|$. Since $G$ also acts transitively on $V(\Gamma)$, it follows that $6n\mid |G|$, and consequently $3\mid |G_C|$, by the orbit-stablizer property. Similarly, if $G$ has two orbits in its action on $T$, then again $3\mid |G_C|$. In any case, there exists an element $g\in G$ of order $3$ such that $g$ fixes $C$ setwise. It follows that $g$ either cyclically permutes vertices of $C$, or fixes all three of them. Suppose that $g$ fixes all three vertices of $C$. Since $g$ is of order $3$, it easily follows that $g$ fixes also all the neighbours of the vertices from $C$. The connectedness of $\Gamma$ implies that $g=1$, contrary to the fact that $g$ has order $3$. Therefore, $g$ cyclically permutes the vertices of $C$.
 
 Let $\{x,x_1,x_2\}$, $\{y,y_1,y_2\}$ and $\{z,z_1,z_2\}$ be the remaining strong cliques containing $x$, $y$ and $z$ respectively, and let $x_3$, $y_3$, and $z_3$, respectively, be the unique neighbours of $x$, $y$ and $z$ not contained in a triangle. Without loss of generality, we may assume that $g(x)=y$, and $g(y)=z$. This implies that $g(x_3)=y_3$ and $g(y_3)=z_3$. Without loss of generality, we may assume that $g(x_1)=y_1$ and $g(y_1)=z_1$. It is now clear that also $g(x_2)=y_2$ and $g(y_2)=z_2$.
 
Since $C$ is strong clique,  it follows that $\{x_i,y_i,z_i\}$ is not an independent set for each $i\in \{1,2,3\}$. Existence of one edge with between two vertices in $\{x_i,y_i,z_i\}$, using the fact that $g$ is an automorphism cyclically permuting those vertices, implies that $\{x_i,y_i,z_i\}$ induces a triangle, see Fig.~\ref{fig:5-val}.

\begin{figure}[h]

\begin{center}
\includegraphics[scale=0.9]{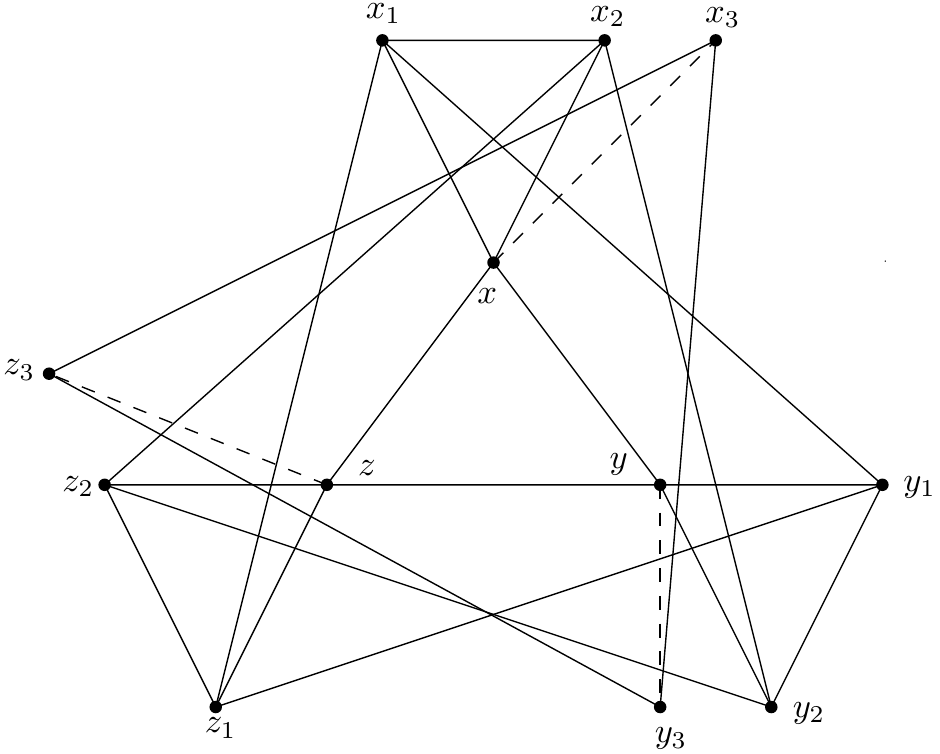} 
\caption{Local structure of $\Gamma$ around $C$. Dashed edges are the edges not contained in a triangle.}
\label{fig:5-val}
\end{center}
\end{figure}

We claim that $\{x_3,y_1,z_2\}$ is an independent set.
Suppose that $x_3$ is adjacent with $y_1$. Since edge $\{x,x_3\}$ does not belong to a triangle, it follows that $\{x_3,y_1\}$ belongs to a triangle. However, $y_1$ already belongs to triangles $\{y,y_1,y_2\}$ and $\{x_1,y_1,z_1\}$. Since each vertex belongs to exactly $2$ triangles, it follows that $\{x_3,y_1\}$ does not belong to a triangle, a contradiction. The obtained contradiction shows that $x_3$ is not adjacent with $y_1$, and by analogy, $x_3$ is not adjacent with $z_2$, as well. If $y_1$ and $z_2$
are adjacent, then again, we obtain third triangle containing $y_1$, namely $\{y_1,z_1,z_2\}$. Therefore,   $\{x_3,y_1,z_2\}$ is an independent set, and it dominates $C$, contrary to the assumption that $C$ is a strong clique. The obtained contradiction shows that there is no 5-valent vertex-transitive graph admitting a strong clique, with a vertex belonging to two strong cliques.

Suppose now that each vertex of $\Gamma$ belongs to a unique strong clique. Denote the remaining neighbours of $x$, $y$, and $z$, with $x_i$, $y_i$, and $z_i$, respectively, where $i\in \{1,2,3\}$.
Since each vertex belongs to a unique strong clique, it follows that each strong clique is a block of imprimitivity for the action of $G$. Therefore, there exists an automorphism $g\in \Aut(\Gamma)$ cyclically permuting the vertices of $C$. Without loss of generality, we may assume that $g(x)=y$ and $g(y)=z$. Arguing similarly as in the previous case, we may assume that for each $i\in \{1,2,3\}$, $g(x_i)=y_i$, $g(y_i)=z_i$, and  $\{x_i,y_i,z_i\}$ forms a triangle in $\Gamma$. 
 Since $C$ is strong clique, it follows that $\{x_1,y_2,z_3\}$ is not an independent set, and without loss of generality, we may assume that $x_1$ is adjacent with $y_2$. By the action of $g$, it follows that also $y_1$ is adjacent to $z_2$, and $z_1$ is adjacent to $x_2$.  Observe that $\{x_2,y_1\} \not \in E(\Gamma)$, since otherwise we obtain that $x_1$ belongs to another triangle $\{x_2,y_1,z_1\}$. 
Since $C$ is a strong clique, it follows that $\{x_2,y_1,z_3\}$ is not an independent set in $\Gamma$. Therefore, we have that $\{x_2,z_3\}\in E(\Gamma)$ or $\{y_1,z_3\}\in E(\Gamma)$. 
Without loss of generality, we assume that $\{y_1,z_3\}\in E(\Gamma)$. The action of $g$ implies that also $\{z_1,x_3\}, \{x_1,y_3\}\in E(\Gamma)$. 
Observe that $\{x_1,z\}$ is an independent set in $\Gamma$ which dominates $y_2$ and $z_2$. Since $\{x_2,y_2,z_2\}$ is a strong clique it follows that $\Gamma(x_2)\subset \Gamma(x_1)\cup \Gamma(z)$. Observe that $\Gamma(x_1)=\{x,y_1,z_1,y_2,y_3\}$ and $\Gamma(z)=\{x,y,z_1,z_2,z_3\}$. It follows that $x_2$ must be adjacent with one of $y_3$ or $z_3$. 
If $\{x_2,y_3\}\in E(\Gamma)$ it follows that $\{y_2,z_3\},\{z_2,x_3\}\in E(\Gamma)$, and since $\Gamma$ is connected, it follows that there are no other vertices in $\Gamma$. It is not difficult to verify that in this case $\Gamma\cong Cay(\ZZ_{12},\{\pm 1, \pm 4,6\})$.
The same result follows if $\{x_2,z_3\}\in E(\Gamma)$. This concludes the proof.
\end{proof}

\begin{corollary}
Let $\Gamma$ be a 5-valent vertex-transitive graph with $\omega(\Gamma)\neq 4$. Then $\Gamma$ admits a strong clique if and only if it is localizable. 
\end{corollary}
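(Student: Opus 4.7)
The plan is to combine Proposition~\ref{prop:5 valent omega=2,3,5,6} with a direct verification of localizability for each graph on its list. The ``localizable implies admits a strong clique'' direction is immediate from the definition, since any part of a partition witnessing localizability is itself a strong clique.

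For the converse, I would first reduce to the connected case. Since $\Gamma$ is vertex-transitive, its connected components are pairwise isomorphic, and every maximal independent set of $\Gamma$ is the disjoint union of maximal independent sets taken one from each component. Therefore a clique $C$ (which is automatically contained in a single component $\Gamma_0$) is strong in $\Gamma$ if and only if it is strong in $\Gamma_0$, and $\Gamma$ is localizable if and only if each component is localizable. So I may assume $\Gamma$ connected; then Proposition~\ref{prop:5 valent omega=2,3,5,6} forces $\Gamma$ to be isomorphic to one of $K_{5,5}$, $Cay(\ZZ_{12},\{\pm 1,\pm 4,6\})$, $K_5\square K_2$, or $K_6$.

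It then remains to exhibit, for each of these four graphs, a partition of $V(\Gamma)$ into strong cliques. I would take: a perfect matching for $K_{5,5}$; the single clique $V(K_6)$; the two $K_5$-layers for $K_5\square K_2$; and the four cosets $\{i,i+4,i+8\}$ for $i\in\{0,1,2,3\}$ of $\langle 4\rangle\leq \ZZ_{12}$ for the Cayley graph, each of which is a triangle since $\pm 4$ belongs to the connection set. Each part in each partition is a clique of size $\omega(\Gamma)$, and Corollary~\ref{cor:strongclique} gives $|V(\Gamma)|=\omega(\Gamma)\cdot\alpha(\Gamma)$, so Theorem~\ref{thm:strongclique} certifies that each part is a strong clique. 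The main (and very mild) obstacle here is spotting the triangle partition of the Cayley graph; everything else is structural and routine.
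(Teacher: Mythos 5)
Your proof is correct and follows the route the paper intends (the corollary is stated without proof as an immediate consequence of Proposition~\ref{prop:5 valent omega=2,3,5,6}): apply the proposition and exhibit, for each of the four listed graphs, a partition into cliques of size $\omega$, which are strong by Theorem~\ref{thm:strongclique} once a strong clique is known to exist. Your explicit partitions and the reduction to the connected case are exactly the routine details the paper leaves implicit, so there is nothing to add.
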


\subsubsection{Valency 5 with clique number 4}\label{sec:valency 5 omega=4}
We now turn our attention to the study of strong cliques in $5$-valent vertex-transitive graph with clique number equal to $4$. 

\begin{lemma}\label{lem:5 valent omega=4}
Let $\Gamma$ be a connected $5$-valent vertex-transitive graph with $\omega(\Gamma)=4$. Then $\Gamma$ is irreducible. Furthermore, if local graph of $\Gamma$ has a universal vertex, then $\Gamma$  admits a strong clique if and only if it is isomorphic to $ C_4[K_2]$.
\end{lemma}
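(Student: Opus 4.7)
I would first dispatch the irreducibility claim by contradiction. If $\Gamma$ were reducible, the structural result cited in the preliminaries gives $\Gamma\cong\Gamma'[nK_1]$ for an irreducible vertex-transitive $\Gamma'$ with $n\geq 2$; comparing valencies yields $n\cdot\mathrm{val}(\Gamma')=5$, forcing $n=5$ and $\Gamma'\cong K_2$, and hence $\Gamma\cong K_{5,5}$. This contradicts $\omega(\Gamma)=4$.

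For the second part, the forward direction is a routine check on $C_4[K_2]$: it is $5$-valent vertex-transitive with $\omega=4$ and $\alpha=2$, its local graph at each vertex consists of a universal vertex joined to a $2K_2$, and every maximum clique $C$ satisfies $|C||I|=8=|V(\Gamma)|$ for each maximal independent set $I$, hence is strong by Theorem~\ref{thm:strongclique}. For the converse, suppose $\Gamma$ admits a strong clique $C=\{v_1,v_2,v_3,v_4\}$; by Corollary~\ref{cor:strongclique} $|C|=\omega(\Gamma)=4$, and each $v_i$ has two outside neighbors. Call $u\in\Gamma(v)$ a \emph{universal neighbor of $v$} if $u$ is universal in the local graph at $v$; a degree count shows that this relation is symmetric. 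No outside neighbor of $v_i$ can be a universal neighbor of $v_i$, since that would produce a $5$-clique $C\cup\{u\}$. I would then rule out the possibility that some vertex has two universal neighbors: any two universal neighbors of $v$ are adjacent and mutually universal, so the ``universal edges'' form a $2$-regular graph that decomposes into triangles; restricted to $C$, this would require a triangle partition of $|C|=4$, which is impossible. Hence the universal edges form a perfect matching on $V(\Gamma)$, and within $C$ it is (up to relabeling) $\{\{v_1,v_2\},\{v_3,v_4\}\}$.

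Using that $v_2$ is universal in $\Gamma(v_1)$ and vice versa, I would show that $v_1,v_2$ share the same outside neighborhood $\{w_1,w_2\}$, and analogously $v_3,v_4$ share $\{w_3,w_4\}$ with $\{w_1,w_2\}\cap\{w_3,w_4\}=\emptyset$ (a common outside neighbor would give a $5$-clique). The explicit form of $\Gamma(v_3)$ gives $v_3,v_4\not\sim w_1,w_2$, so the triangle-free restriction of $\Gamma(v_1)$ to $\{v_3,v_4,w_1,w_2\}$ leaves a single binary choice: whether $w_1\sim w_2$. Meanwhile, by Lemma~\ref{lem:dominates}, each dominating pair $\{w_i,w_j\}$ with $i\in\{1,2\},j\in\{3,4\}$ cannot be independent, so all four cross-edges $w_i\sim w_j$ are present.

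The main obstacle is eliminating the subcase $w_1\not\sim w_2$. Here the same analysis at $v_3$ yields $w_3\not\sim w_4$, so the fifth neighbor $x$ of $w_1$ is a new vertex. Examining $\Gamma(w_1)=\{v_1,v_2,w_3,w_4,x\}$, none of $v_1,v_2,w_3,w_4$ can be its universal vertex (for instance $v_1\not\sim w_3$), so $x$ must be, and is therefore a common neighbor of $v_1,v_2$, giving $x\in\{v_3,v_4,w_2\}$. But each of these contradicts an established nonedge to $w_1$: $v_3,v_4\not\sim w_1$ from the outside-neighbor structure, and $w_2\not\sim w_1$ by the subcase assumption. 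In the remaining subcase $w_1\sim w_2$ (and symmetrically $w_3\sim w_4$), combined with the four cross-edges, each of $v_1,\ldots,v_4,w_1,\ldots,w_4$ has all $5$ of its neighbors inside this $8$-element set; connectedness of $\Gamma$ then forces these to be all the vertices, and the resulting adjacency pattern is precisely $C_4[K_2]$, with the four pairs $\{v_1,v_2\},\{v_3,v_4\},\{w_3,w_4\},\{w_1,w_2\}$ playing the roles of the vertices of $C_4$.
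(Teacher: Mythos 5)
Your proof is correct, but it takes a genuinely different route from the paper's. For irreducibility you argue exactly as the paper does ($\Gamma\cong\Gamma'[nK_1]$ would force $\Gamma\cong K_{5,5}$). For the second part, however, the paper exploits the universal vertex in the local graph globally: by vertex-transitivity every vertex has a true twin (same closed neighbourhood), so $\Gamma\cong X[K_t]$ with $t\geq 2$; the valency equation $t(\mathrm{val}(X)+1)=6$ together with $\omega(\Gamma)=4$ forces $t=2$ and $X\cong C_n$, hence $\Gamma\cong C_n[K_2]$, and one then checks that $C_n[K_2]$ has a strong clique only for $n=4$. You instead fix a strong clique $C$ and reconstruct $\Gamma$ around it: the twin matching inside $C$, the shared outside neighbourhoods $\{w_1,w_2\}$ and $\{w_3,w_4\}$, the four cross edges via Lemma~\ref{lem:dominates}, and the universal-vertex condition in $\Gamma(w_1)$ to exclude $w_1\not\sim w_2$. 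Your route is longer but more self-contained and never needs to classify all graphs with this local structure, whereas the paper's is a two-line structural reduction that yields the whole family $C_n[K_2]$ as a by-product. Two small blemishes, neither fatal: when excluding a vertex with several universal neighbours you only treat the case of exactly two (triangle components of the universal edges); if a vertex of $C$ had three or more, all of $C$ would share one closed neighbourhood and any outside neighbour of $v_1$ would yield a $5$-clique, so this case should at least be mentioned. Also, in the subcase $w_1\not\sim w_2$, the claim that ``the same analysis at $v_3$ yields $w_3\not\sim w_4$'' is neither justified nor needed: the fifth neighbour of $w_1$ is a new vertex simply because $v_3,v_4$ and $w_2$ are already excluded, and your universal-vertex contradiction then applies; it is the symmetric argument at $w_3$ that gives $w_3\sim w_4$ in the surviving case, as you implicitly use at the end.
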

\begin{proof}
Suppose that $\Gamma$ is a reducible connected $5$-valent vertex-transitive graph with $\omega(\Gamma)=4$. 
Then $\Gamma\cong X[tK_1]$, where $X$ is a vertex-transitive graph, and $t\geq 2$. Since $\Gamma$ is $5$-valent it follows that $t=5$, and therefore $\Gamma\cong K_2[5K_1]\cong K_{5,5}$, which is impossible, since $\omega(\Gamma)=4$.

Suppose now that the local graph of $\Gamma$ has a universal vertex. It follows that $\Gamma\cong X[K_t]$, for some vertex-transitive graph $X$, and positive integer $t\geq 2$. Since $\Gamma$ is $5$-valent and $\omega(\Gamma)=4$, it is easy to see that $t=2$. Hence $\Gamma \cong C_n[K_2]$. It is not difficult to see that $C_n[K_2]$ admits a strong clique if and only if $n=4$.  
\end{proof}

Let $\Gamma$ be an irreducible $5$-valent vertex-transitive graph with $\omega(\Gamma)=4$ admitting a strong clique, and let $L$ denote the local graph of $\Gamma$. Suppose that $L$ doesn't have a universal vertex. Since $\omega(\Gamma)$ is $4$, it follows that $\omega(L)=3$, and  by Lemma~\ref{lem:irreducible} it follows that no two cliques of size $3$ in $L$ share an edge. Since $L$ has no universal vertex, it follows that $L$ has a unique clique of size $3$. Proof of the following simple lemma is straightforward and is thus omitted.

\begin{lemma}\label{lem:local graphs}
Let $L$ be a graph of order $5$ having exactly one clique of size $3$ and no universal vertex. Then $L$ is isomorphic to one of the graphs shown in Figure~\ref{fig:five}.
\end{lemma}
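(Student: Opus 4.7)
My plan is a short case analysis based on the unique triangle and the two remaining vertices. Let $T=\{a,b,c\}$ be the vertex set of the unique triangle of $L$, and let $d,e$ denote the other two vertices of $L$. Because $\{a,b,c\}$ is already a triangle, any two neighbours in $T$ of a single outside vertex would form a second triangle; hence $|N(d)\cap T|\le 1$ and $|N(e)\cap T|\le 1$. Similarly, each $x\in T$ already has degree two inside $T$, so if $x$ were adjacent to both $d$ and $e$ then $x$ would become universal; therefore $N(d)\cap N(e)\cap T=\emptyset$. Putting these together, $X_d:=N(d)\cap T$ and $X_e:=N(e)\cap T$ are disjoint subsets of $T$ of size at most $1$.

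Up to relabelling within $T$ and swapping $d$ and $e$, the pair $(|X_d|,|X_e|)$ is one of $(0,0)$, $(1,0)$, $(1,1)$. Each of these three sub-cases further splits according to whether the edge $\{d,e\}$ is present or not, giving at most six candidates. In the $(1,1)$ case I also have to check that $d\sim e$ does not create a new triangle; but since $X_d$ and $X_e$ are disjoint, no common neighbour of $d$ and $e$ lies in $T$, so no new triangle is created. Conversely, each of the six resulting graphs has only one triangle (the original $T$) and no vertex of degree $4$, so each one is admissible.

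Finally I would verify that these six graphs are pairwise non-isomorphic by reading off their degree sequences, which are
\[
(2,2,2,0,0),\ (2,2,2,1,1),\ (3,2,2,1,0),\ (3,2,2,2,1),\ (3,3,2,1,1),\ (3,3,2,2,2),
\]
all distinct. This matches exactly the graphs displayed in Figure~\ref{fig:five}, completing the proof. The argument contains no genuine obstacle; the only thing to be careful about is not to double-count under the symmetries of $T$ and under swapping $d$ with $e$, which is why I normalize by those symmetries before enumerating.
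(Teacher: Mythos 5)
Your proof is correct: the paper itself omits the argument as "straightforward," and your case analysis (each outside vertex has at most one neighbour on the unique triangle, these neighbours are distinct since a triangle vertex adjacent to both outside vertices would be universal, then split on the presence of the edge between the two outside vertices) is exactly the intended enumeration, yielding the six pairwise non-isomorphic graphs $L_1,\dots,L_6$ of Figure~\ref{fig:five}, e.g.\ $K_3+2K_1$, $K_3+K_2$, the bull and the house among them, as your degree sequences confirm.
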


 \begin{figure}[h!]
\begin{center}
\includegraphics[scale=0.9]{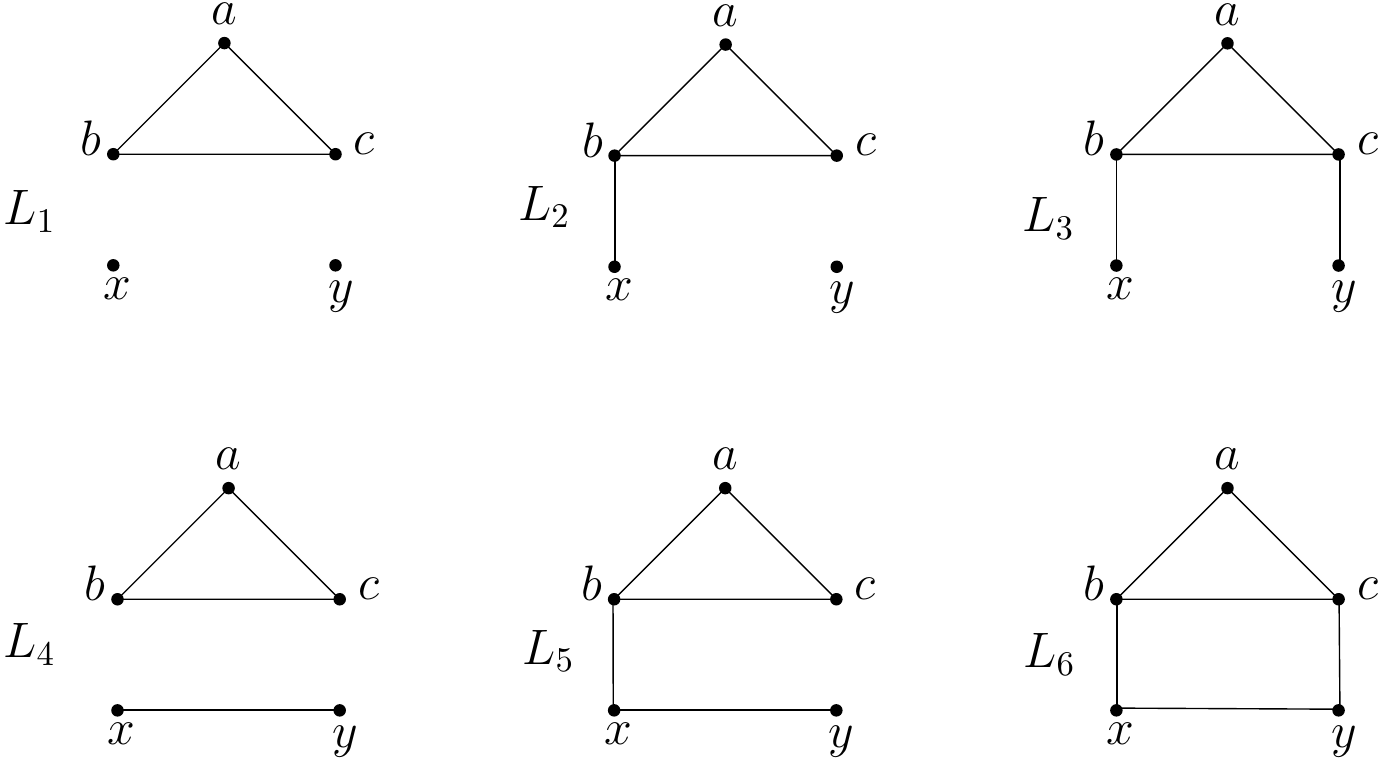} 
\caption{Local graphs of $5$-valent vertex-transitive with $\omega=4$ admitting a strong clique}
\label{fig:five}
\end{center}
\end{figure}

Proposition~\ref{prop:5 valent omega=2,3,5,6} and Lemmas \ref{lem:irreducible}, \ref{lem:5 valent omega=4} and \ref{lem:local graphs} imply that the study of existence of strong cliques in $5$-valent vertex-transitive graphs is reduced to the study of vertex-transitive graphs with local graph being isomorphic to one of the graphs $L_i$ shown on Figure~\ref{fig:five}. 
In the following lemma we give an infinite family of graphs (including graphs $L_2$ and $L_3$) that can not be realised as local graphs of a vertex-transitive graph.

\begin{lemma}
Let $X$ be a graph with unique maximum clique $C$. If there exists an edge in $X$ not contained in $C$, and every edge in $X$ has non-empty intersection with $C$, then there is no vertex-transitive graph with local graph isomorphic to $X$.
\end{lemma}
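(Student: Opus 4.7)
The approach is proof by contradiction via a triangle double-count. Suppose $\Gamma$ is a vertex-transitive graph with $\Gamma(v)\cong X$ for every $v\in V(\Gamma)$. Let $k=\omega(X)$ and, for each $v$, let $C_v\subseteq\Gamma(v)$ be the unique maximum clique of the local graph at $v$, and set $A_v=\Gamma(v)\setminus C_v$. The two hypotheses on $X$ translate locally as follows: $A_v$ is an independent set in $\Gamma(v)$ (because every edge of $X$ meets $C$, so no edge of $X$ can lie entirely inside $V(X)\setminus C$), and the number $e$ of edges in $\Gamma(v)$ joining $C_v$ to $A_v$ is at least $1$ (because $X$ has an edge not contained in $C$). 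Note that $e$ is an invariant of $X$, independent of $v$.

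The first preparatory step is to show that $K_v:=\{v\}\cup C_v$ is the unique $(k+1)$-clique of $\Gamma$ containing $v$, and that the sets $K_v$ therefore form a partition of $V(\Gamma)$ into $(k+1)$-cliques (which I will call \emph{parts}). Any $(k+1)$-clique through $v$ has the form $\{v\}\cup K'$ for a $k$-clique $K'\subseteq\Gamma(v)$, which is a maximum clique of $\Gamma(v)\cong X$, hence equals $C_v$ by uniqueness; applying this at each $u\in K_v\setminus\{v\}$ gives $K_u=K_v$, producing the partition.

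The main step is to count in two ways the number $t$ of triangles of $\Gamma$ of \emph{mixed type}, i.e. triangles with two vertices in one part and the third in another. Fix $v$. If $v$ is the \emph{solo} vertex of such a triangle, the remaining two vertices $w_1,w_2$ lie in a common part $P\neq K_v$, so $w_1,w_2\in\Gamma(v)\setminus K_v=A_v$, and $w_1\sim w_2$; this contradicts the independence of $A_v$ in $\Gamma(v)$, so there are no mixed triangles through $v$ of this kind. If instead $v$ lies on the \emph{two-vertex side}, the triangle has the form $\{v,u,w\}$ with $u\in C_v$, $w\in A_v$, and $u\sim w$, and the number of such triangles equals the number of $C_v$-to-$A_v$ edges in $\Gamma(v)$, namely $e$. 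Since every mixed triangle has exactly one solo vertex and exactly two vertices on the two-vertex side, summing the two counts over all $v$ gives
\[
t=\sum_{v\in V(\Gamma)}0=0 \qquad\text{and}\qquad 2t=\sum_{v\in V(\Gamma)}e=|V(\Gamma)|\,e,
\]
so $|V(\Gamma)|\,e=0$, forcing $e=0$, which contradicts the hypothesis. I do not foresee a serious obstacle; the crux is the observation that the condition ``every edge meets $C$'' is equivalent to $V(X)\setminus C$ being independent, which makes mixed triangles invisible from the solo side yet mandatory (in number $e$) from the two-vertex side at every vertex.
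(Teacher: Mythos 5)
Your proof is correct and rests on the same two facts as the paper's own argument: each vertex of $\Gamma$ lies in a unique maximum clique (your partition into the cliques $K_v$), and a $C_v$--$A_v$ edge creates a triangle which, viewed from its third vertex, violates the hypothesis that every edge of the local graph meets the unique maximum clique --- your solo-vertex analysis is exactly the paper's contradiction at the vertex $y$. The global double count is harmless but superfluous: a single mixed triangle, which exists because $e\ge 1$, already gives the contradiction.
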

\begin{proof}
Suppose that $\Gamma$ is vertex-transitive graph and the local graph of $\Gamma$ at $v$ is isomorphic to $X$.
Let $C$ be the unique maximum clique of $X$, and let $\{x,y\}$ be an edge with $x\in C$ and $y\not \in C$. By the assumption on $X$ it follows that each vertex in $\Gamma$ belongs to unique maximum clique in $\Gamma$. Observe that $\{v,x\}$ is an edge in $\Gamma(y)$. Let $C'$ be the unique maximum clique of $\Gamma(y)$. By the hypothesis, it follows that $v\in C'$ or $x\in C'$. It follows that $C'\cup \{x\}$ or $C'\cup \{v\}$ is a maximum clique in $\Gamma$, contrary to the assumption that $C\cup \{v\}$ is the unique maximum clique in $\Gamma$ containing $v$ and $x$.
\end{proof}

\begin{corollary}
There is no vertex-transitive graph with local graph isomorphic to $L_2$ or $L_3$.
\end{corollary}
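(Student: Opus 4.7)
The plan is simply to invoke the preceding lemma: I need to check that each of $L_2$ and $L_3$ satisfies its three hypotheses, namely (a) it has a unique maximum clique $C$, (b) it contains at least one edge not contained in $C$, and (c) every edge meets $C$. Once these are verified, the conclusion is immediate.

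For hypothesis (a), recall that by Lemma~\ref{lem:local graphs} each $L_i$ has \emph{exactly one} triangle, and the local graph of a graph with $\omega(\Gamma)=4$ has clique number $3$; hence this unique triangle is the unique maximum clique $C$ of $L_i$. So (a) is automatic for all four graphs in Figure~\ref{fig:five}.

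For (b) and (c) I would read off Figure~\ref{fig:five}: both $L_2$ and $L_3$ contain the triangle $C$ on three vertices, together with at least one edge from a vertex in $\{d,e\} := V(L_i)\setminus C$ to a vertex of $C$ (which gives (b)), while the two vertices $d$ and $e$ outside $C$ are non-adjacent. Since the only potential edge avoiding $C$ in a 5-vertex graph whose triangle is $C$ is the edge $de$, its absence yields (c). (This is precisely what distinguishes $L_2$ and $L_3$ from $L_4$, where $d\sim e$, and from $L_1$, where no edge is incident to $\{d,e\}$ at all.)

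Having verified (a)--(c) for both graphs, the previous lemma applies verbatim and rules out each of $L_2$ and $L_3$ as a local graph of a vertex-transitive graph, which is the assertion of the corollary. There is no real obstacle here: the entire substance of the argument is in the preceding lemma, and the corollary is a two-line inspection of the figure.
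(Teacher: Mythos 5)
Your proof is correct and follows the same route as the paper, which states the corollary as an immediate consequence of the preceding lemma: the unique triangle of $L_2$ and $L_3$ is their unique maximum clique, the pendant edge(s) give an edge not contained in it, and since the two vertices outside the triangle are non-adjacent every edge meets it. The only (immaterial) slip is the remark about ``all four graphs in Figure~\ref{fig:five}''---the figure lists six local graphs $L_1,\ldots,L_6$---but this does not affect the verification for $L_2$ and $L_3$.
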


\begin{lemma}
There is no vertex-transitive graph with local graph isomorphic to $L_5$.
\end{lemma}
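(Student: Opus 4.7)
The plan is a proof by contradiction. Suppose $\Gamma$ is a vertex-transitive graph whose local graph is isomorphic to $L_5$. Fix $v \in V(\Gamma)$ and label the five neighbors of $v$ as $\{a,b,c,d,e\}$ following the structure of $L_5$, so that the unique triangle of $\Gamma(v)$ is $\{a,b,c\}$ with $a$ the degree-$3$ central vertex, and $d,e$ form the pendant path with $a \sim d$ and $d \sim e$ (so $e$ has degree $1$ in $\Gamma(v)$). Since $\Gamma(v)$ has a unique triangle, $\{v,a,b,c\}$ is the only $K_4$ of $\Gamma$ containing $v$; by vertex-transitivity, every vertex of $\Gamma$ lies in a unique $K_4$, a fact I will use throughout.

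The key step is to pin down the structure of $\Gamma(d)$. From $\Gamma(v)$, three of $d$'s neighbors are $v, a, e$; denote the remaining two by $s, t$. The non-adjacencies $b \not\sim d$ and $c \not\sim d$ (read off $\Gamma(v)$) force $s, t \notin \Gamma(v)$. On the vertex set $\{v, a, e, s, t\}$, I enumerate possible triangles of $\Gamma(d)$: every triangle containing $v$ fails because $v \not\sim s, v \not\sim t$, and $a \not\sim e$; and a triangle $\{a, s, t\}$ would create a second $K_4$, namely $\{a, d, s, t\}$, containing $a$, contradicting the uniqueness of $K_4$. The only surviving possibility is $\{e, s, t\}$, so $d$'s $K_4$ equals $e$'s $K_4 = \{d, e, s, t\}$. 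Computing degrees, $v$ has degree $2$ in $\Gamma(d)$ (its common neighbors with $d$ are $a$ and $e$), so $v$ plays the path-middle role of $L_5$, forcing $a$ to be the path-end of degree $1$. Consequently, the fifth neighbor of $a$, call it $a_5$, satisfies $a_5 \notin \{s, t\}$, and hence $a_5 \notin \Gamma(d)$.

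The contradiction arises when analyzing $\Gamma(a)$ on vertex set $\{v, b, c, d, a_5\}$. The edges $\{v,b\}, \{v,c\}, \{v,d\}, \{b,c\}$ are forced, while the non-adjacencies $v \not\sim a_5$, $b \not\sim d$, $c \not\sim d$, and $d \not\sim a_5$ are all established. The only undetermined potential edges are $\{b, a_5\}$ and $\{c, a_5\}$. A short case analysis rules out every possibility: including both creates a second triangle $\{b, c, a_5\}$ in addition to $\{v, b, c\}$; including exactly one produces degree sequence $(3,3,2,1,1)$ rather than the degree sequence $(3,2,2,2,1)$ of $L_5$; and including neither leaves $a_5$ isolated, which $L_5$ does not allow. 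Thus $\Gamma(a) \not\cong L_5$, a contradiction. The main obstacle is the careful bookkeeping of adjacencies and non-adjacencies across the three local graphs $\Gamma(v), \Gamma(d), \Gamma(a)$, together with correctly identifying which role a specific vertex plays inside each local graph by matching its forced degree to the $L_5$ structure.
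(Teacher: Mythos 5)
Your proof is correct and is essentially the paper's argument: both proceed by propagating the forced $L_5$ structure to the local graphs of two neighbours of $v$ (the triangle vertex carrying the pendant path and the path-middle vertex) and finding that one of them cannot be isomorphic to $L_5$. You merely run the propagation in the opposite order (forcing the structure at $\Gamma(d)$ and contradicting at $\Gamma(a)$, where the paper forces at the attachment vertex and contradicts at the path-middle vertex) and make explicit the unique-$K_4$ observation that the paper uses implicitly.
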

\begin{proof}
Let $\Gamma$ be a vertex-transitive graph with $\Gamma(v)\cong L_5$. Let $z$ be the remaining neighbour of $b$. Since $\Gamma(b)\cong L_5$ it follows that $z$ is adjacent with $x$ and non-adjacent with $c$ and $a$.
Let $w$ be the remaining neighbour of $x$. It is not difficult to see that $\Gamma(x)\not \cong L_5$, a contradiction.
\end{proof}

\begin{lemma}
The only connected vertex-transitive graph with local graph isomorphic to $L_6$ is $\overline{C_8}$.
\end{lemma}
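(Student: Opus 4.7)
The plan is to exploit the fact that $L_6$ contains a unique triangle in order to partition $V(\Gamma)$ into $K_4$'s, and then rigidly determine the edges between two ``partnered'' $K_4$'s. I will fix a vertex $v$ and label $\Gamma(v)=\{a,b,c,x,y\}$ so that $\{a,b,c\}$ is the unique triangle of $L_6$ at $v$ and the remaining edges of the local graph are $\{x,y\}$, $\{a,y\}$, $\{c,x\}$. Then $Q_v:=\{v,a,b,c\}$ is a $K_4$ and, since $L_6$ has exactly one triangle, it is the unique $K_4$ containing $v$; by vertex-transitivity the $K_4$'s of $\Gamma$ partition $V(\Gamma)$. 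In particular every vertex has exactly two external neighbours (the two non-triangle vertices of its copy of $L_6$) and these two externals are adjacent to each other.

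I will then show that $x$ and $y$ lie in a common $K_4$: reading $L_6$ at $v$, the only edges from $x$ into $Q_v$ are $\{x,v\}$ and $\{x,c\}$, so $v$ and $c$ are the two external neighbours of $x$; since $y\sim x$ and $y\notin\{v,c\}$, $y$ must be internal to $x$. Write this common $K_4$ as $Q^* = \{x,y,s,t\}$. Next I will determine all external edges between $Q_v$ and $Q^*$ by reading the local copy of $L_6$ at each of $a,b,c$. At $a$: the second external $a'$ is adjacent to $y$, lies in $\Gamma(y)\setminus\{v,a\}=\{x,s,t\}$, and satisfies $a'\ne x$ (since $a\not\sim x$ in $L_6$ at $v$); after relabelling $\{s,t\}$ we may take $a' = s$. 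At $c$: analogously the second external $c'$ lies in $\{s,t\}$. At $b$: neither external can equal $x$ or $y$ (else $b\sim x$ or $b\sim y$, contradicting $L_6$ at $v$), so $v$ plays the role of the attachment-free vertex of the triangle in the local $L_6$ at $b$ and the two externals of $b$ must connect to $a$ and $c$, which forces them to be $s$ and $c'$. Distinctness gives $c'=t$, yielding the full list of external edges $\{v,x\},\{v,y\},\{a,y\},\{a,s\},\{b,s\},\{b,t\},\{c,t\},\{c,x\}$.

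Every external edge incident with $Q_v$ now ends in $Q^*$, and by the symmetric argument inside $Q^*$ every external edge out of $Q^*$ lands in $Q_v$; hence $Q_v\cup Q^*$ is a union of connected components of $\Gamma$, and connectedness forces $V(\Gamma)=Q_v\cup Q^*$, so $|V(\Gamma)|=8$. Finally, computing the non-edges, $\overline{\Gamma}$ is $2$-regular and its edges form the Hamilton cycle $v-s-c-y-b-x-a-t-v$, giving $\Gamma\cong \overline{C_8}$. The main obstacle will be the casework in the third step: one must track, for each vertex of $Q_v$, which element of $\{v,a,b,c\}$ plays the role of the attachment-free triangle vertex in its local copy of $L_6$, and rule out every alternative matching using only the non-adjacencies dictated by $L_6$ at $v$.
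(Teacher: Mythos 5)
Your proof is correct and follows essentially the same route as the paper: fix $v$, read off the copies of $L_6$ at the triangle neighbours of $v$ to pin down all remaining edges, conclude that $\Gamma$ has exactly $8$ vertices, and identify $\overline{\Gamma}$ as an $8$-cycle. The only difference is organizational -- you make the partition into $K_4$'s and the external-edge count explicit, which in fact tightens the paper's rather terse ``since $\Gamma$ is connected and $5$-valent, there are no other vertices'' step, but the underlying argument is the same.
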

\begin{proof}
Let $\Gamma$ be a vertex-transitive graph with $\Gamma(v)\cong L_6$.
Let $z$ and $w$ be the remaining neighbours of $a$. Since $\Gamma(a)\cong L_6$ it follows that $w\sim z$ and without loss of generality we may assume that $b\sim w$ and $c\sim z$. Considering $\Gamma(b)$ it follows that $x\sim w$ and considering $\Gamma(c)$ it follows that $y\sim z$. Since $\Gamma$ is connected and $5$-valent, it follows that there are no other vertices of $\Gamma$. It is easy to see that $\overline{\Gamma}$ is $8$-cycle $vwcxaybzv$, hence $\Gamma\cong \overline{C_8}$.
\end{proof}

In the following lemma we show that there is a unique connected vertex-transitive graph with local graph isomorphic to $L_4$ admitting a strong clique.
\begin{lemma}
Let $\Gamma$ be a connected vertex-transitive graph with local graph isomorphic to $L_4$. Then $\Gamma$ admits a strong clique if and only if $\Gamma\cong K_3\square K_4$. 
\end{lemma}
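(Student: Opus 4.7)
The plan is to realize $\Gamma$ as $K_3\square K_4$ by exhibiting two transverse partitions of $V(\Gamma)$ whose incidence is forced to be a $3\times 4$ grid. Fix $v\in V(\Gamma)$ and write $\Gamma(v)=\{b,c,d\}\cup\{x,y\}$ with $\{b,c,d\}$ the $K_3$-component and $\{x,y\}$ the $K_2$-component of $L_4$. Since $L_4$ has a unique triangle, every vertex of $\Gamma$ lies in a unique $K_4$, so these $K_4$s form a partition $\mathcal{P}_4$ of $V(\Gamma)$, and each element of $\mathcal{P}_4$ is a strong clique by Corollary~\ref{cor:strongclique}. Because none of $b,c,d$ is adjacent to $x$ or $y$, the triangle $\Delta_v:=\{v,x,y\}$ is not contained in any $K_4$; a short local check shows each vertex lies in a unique such ``external triangle'', yielding a second partition $\mathcal{P}_3$. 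Analogous local arguments give $|Q\cap T|\le 1$ for all $(Q,T)\in\mathcal{P}_4\times\mathcal{P}_3$, and show the four external triangles meeting $Q_v$ are pairwise disjoint, accounting for exactly $12$ vertices (including $Q_v$).

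Next, I translate the strong-clique condition for $Q_v=\{v_0,v_1,v_2,v_3\}$ via Lemma~\ref{lem:dominates}: no transversal of the external pairs $E_{v_0},\dots,E_{v_3}$ is independent. Since external triangles are vertex-disjoint, two such representatives are adjacent if and only if they lie in the same $K_4$. Hence the condition becomes: no system of distinct representatives exists for the four $2$-element multisets $L_i:=\{Q_s:s\in E_{v_i}\}\subseteq\mathcal{P}_4\setminus\{Q_v\}$. By Hall's theorem this forces the multiset $\{L_0,L_1,L_2,L_3\}$, counted by pair-value multiplicities, to be one of $(4,0,0)$, $(3,1,0)$, $(2,2,0)$, $(2,1,1)$. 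The setwise stabilizer $G_{Q_v}\le\Aut(\Gamma)$ acts transitively on $Q_v$ and therefore permutes $\{L_i\}$ preserving pair multiplicities, which immediately eliminates $(3,1,0)$ and $(2,1,1)$.

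The main obstacle is ruling out $(2,2,0)$. In that case the two present pair-values necessarily share a common ``special'' $K_4$ $Q_x$, and the pairs at $Q_v$ are $\{Q_x,Q_y\}$ and $\{Q_x,Q_z\}$ each with multiplicity~$2$, where $\{Q_x,Q_y,Q_z\}$ are the external $K_4$s adjacent to $Q_v$. Since all four external triangles at $Q_v$ pass through $Q_x$, they are also the four external triangles at $Q_x$, forcing the pairs at $Q_x$ to be $\{Q_v,Q_y\}$ and $\{Q_v,Q_z\}$; this yields exactly $2$ edges between $Q_x$ and $Q_y$. On the other hand $Q_y$ must also realize pattern $(2,2,0)$, and only $2$ of its external triangles involve $Q_v$ (the two that also contain $Q_x$), so the remaining two have pair $\{Q_x,Q_w\}$ for a fourth $K_4$ $Q_w\neq Q_z$ (distinctness from $Q_z$ is needed to prevent the quotient graph from collapsing into an over-full $K_4$ that would force $|\mathcal{P}_4|=4$, contradicting divisibility). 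Summing contributions, $Q_y$ sends $4$ edges to $Q_x$, contradicting the $2$ counted from the $Q_x$ side.

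Hence only $(4,0,0)$ survives: all external pairs at $Q_v$ hit the same two $K_4$s $\{Q_x,Q_y\}$. By vertex-transitivity the simple quotient graph on $\mathcal{P}_4$ is connected and $2$-regular, hence a cycle; each external triangle yields a triangle in this cycle, forcing it to be $K_3$. Thus $|\mathcal{P}_4|=3$, $|V(\Gamma)|=12$, and $|\mathcal{P}_3|=4$. Combined with $|Q\cap T|\le 1$ and $\sum_{Q,T}|Q\cap T|=|V(\Gamma)|=|\mathcal{P}_4|\cdot|\mathcal{P}_3|$, this gives $|Q\cap T|=1$ for every pair $(Q,T)$. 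The map $w\mapsto (Q_w,\Delta_w)$ is then a bijection $V(\Gamma)\to\mathcal{P}_4\times\mathcal{P}_3$ under which adjacency is exactly ``same $K_4$ or same external triangle'', which is the edge rule of $K_3\square K_4$.
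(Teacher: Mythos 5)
Your proof is correct, but it takes a genuinely different route from the paper's. The paper also fixes a strong $4$-clique $C$ and uses that its setwise stabilizer acts transitively on $C$ (through a regular subgroup isomorphic to $\ZZ_4$ or $\ZZ_2\times \ZZ_2$), but then it argues locally: it performs a case analysis on the maximum degree of the subgraph induced on the eight outside neighbours of $C$, showing that the two ``layers'' $\{x_1,y_1,z_1,w_1\}$ and $\{x_2,y_2,z_2,w_2\}$ must both be $4$-cliques (the degenerate configurations are excluded by producing an independent dominating set, by a packing obstruction for blue triangles, or by the observation that one neighbouring strong clique would be attached to two strong cliques by a single edge while another is attached by two edges, contradicting transitivity on strong cliques); this gives that the neighbourhood of every $K_4$ is a disjoint union of two $K_4$s, hence $12$ vertices. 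You instead work with the two canonical partitions (the $K_4$s and the ``external'' triangles), encode strongness of $Q_v$ as the non-existence of an SDR for the four pair-values $L_i$ via Hall's theorem, eliminate the patterns $(3,1,0)$ and $(2,1,1)$ because a transitive block stabilizer forces equal class sizes, kill $(2,2,0)$ by double-counting the $Q_x$--$Q_y$ edges ($2$ from the $Q_x$ side versus $4$ from the $Q_y$ side), and then identify the quotient on $\mathcal{P}_4$ as $K_3$ and reconstruct the Cartesian product from the pair of transversal partitions. Your version buys cleaner global bookkeeping and an explicit isomorphism with $K_3\square K_4$, at the cost of leaning slightly more on transitivity of $\Aut(\Gamma)$ on the $K_4$-partition; the paper's version is more local and elementary.

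Two small repairs. First, ``each element of $\mathcal{P}_4$ is strong'' does not follow from Corollary~\ref{cor:strongclique} as cited (that corollary only says strong cliques are maximum); it follows either from Theorem~\ref{thm:strongclique} (a strong clique of size $4$ forces $4|I|=|V(\Gamma)|$ for every maximal independent set $I$, whence every $4$-clique is strong) or from the fact that $\Aut(\Gamma)$ permutes the $K_4$s transitively. Second, in the $(2,2,0)$ case the assertion $Q_w\neq Q_z$ (with its divisibility parenthetical) is unnecessary: your count of four $Q_y$--$Q_x$ edges holds whether or not $Q_w=Q_z$, so it can be dropped; and the claim that $Q_y$ realizes the same multiplicity pattern as $Q_v$ deserves the one-line justification that all $K_4$s lie in a single $\Aut(\Gamma)$-orbit (or, alternatively, note that even if $Q_y$ had pattern $(4,0,0)$ its common pair would have to be $\{Q_v,Q_x\}$, again giving four $Q_y$--$Q_x$ edges and the same contradiction).
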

\begin{proof}
Let $\Gamma$ be a connected $5$-valent vertex-transitive graph with local graph isomorphic to $L_4$ and suppose that $\Gamma$ admits a strong clique. Let us call an edge of $\Gamma$ {\it blue} if it belongs to clique of size $4$ and {\it red} otherwise. It is clear that the automorphisms of $\Gamma$ preserve such defined colors of edges.  Since the local graph of $\Gamma$ is isomorphic to $L_4$ it follows that every vertex is adjacent with two red edges. 
Let $C=\{x,y,z,w\}$ be a strong clique in $\Gamma$ and let $x_i,y_i,z_i,w_i$ ($i=1,2$) be the remaining neighbours of $x,y,z$ and $w$, respectively. Considering the local graphs at $x,y,z$ and $w$, it follows that $x_1\sim x_2$, $y_1\sim y_2$, $z_1\sim z_2$, and $w_1\sim w_2$, and all of these edges are red.
Let $K$ be the subgroup of $Aut(\Gamma)$ that fixes $C$ setwise in the induced action on $C$. Since $C$ is block for $Aut(\Gamma)$, it follows that $K$ acts transitively on $C$.
There exists a subgroup $H$ of $K$ transitive on $C$ isomorphic either to $\ZZ_4$ or $\ZZ_2\times \ZZ_2$. We will first suppose that $H\cong \ZZ_4$.

Let $\Gamma_C=\Gamma[\{x_1,x_2,y_1,y_2,z_1,z_2,w_1,w_2\}]$.
It is clear that the only red edges in the graph $\Gamma_C$ are $x_1x_2$, $y_1y_2$, $z_1z_2$ and $w_1w_2$.
Suppose that $x_1$ has degree $4$ in $\Gamma_C$.
Without loss of generality, we may assume that $x_1$ is adjacent with $x_2,y_1,z_1,w_1$. Then considering the local graph at $x_1$, it follows that $\{x_1,y_1,z_1,w_1\}$ is a clique in $\Gamma$.
Since $C$ is a strong clique, it follows that $\{x_2,y_2,z_2,w_2\}$ is not an independent set. If some of its vertices has degree 4 in $\Gamma_C$, then it follows that also $\{x_2,y_2,z_2,w_2\}$ is a clique in $\Gamma$. Suppose that none of the vertices $x_2,y_2,z_2,w_2$ is of degree 4 in $\Gamma_C$. Since  $\{x_2,y_2,z_2,w_2\}$  is not independent set, we may assume that $x_2\sim y_2$. Recall that $H$ cyclically permutes vertices of $C$. Then it also cyclically permutes vertices $x_2,y_2,z_2$ and $w_2$. Using this it follows that $\{x_2,y_2,z_2,w_2\}$ is indeed a clique in $\Gamma$, which implies that $\Gamma \cong K_3\square K_4$.

Suppose now that the maximum degree in $\Gamma_C$ is 3, and let $x_1$ be adjacent with $x_2$, $y_1$ and $z_1$. This implies that $y_1\sim z_1$ (by considering the local graph at $x_1$). 
Using the cyclic group $H$, it follows that there 
are 4 triangles formed by blue edges in the graph $\Gamma_C$. However, since there are $8$ vertices in $\Gamma_C$, these triangles cannot be vertex disjoint, hence at least one vertex would have valency at least $4$ in $\Gamma_C$, a contradiction.

It is easy to see that the maximum degree in $\Gamma_C$ cannot be 2, since there would exist an independent set of size $4$ disjoint with $C$ and  dominating the clique $C$, contrary to the assumption that $C$ is strong. This shows that the only possibility when $H\cong C_4$ is that $\{x_1,y_1,z_1,w_1\}$ and $\{x_2,y_2,z_2,w_2\}$ are cliques in $\Gamma$. 

Suppose now that $H\cong \ZZ_2\times \ZZ_2$. If maximal degree in $\Gamma_C$ is $2$, then there exists an independent set of size $4$ disjoint from $C$ which dominates $C$. Using the argument similar to the case when $H$ is cyclic, it can be seen that the maximum degree in $\Gamma_C$ cannot equal to $3$. Hence the maximal degree in $\Gamma_C$ equals to $4$. Let $x_1$ be adjacent with $x_2,y_1,z_1$ and $w_1$. Then by considering the local graph at $x_1$ it follows that $\{x_1,y_1,z_1,w_1\}$ is clique. If some of the vertices $x_2,y_2,z_2$ or $w_2$ has degree $4$, it follows that also $\{x_2,y_2,z_2,w_2\}$ is clique in $\Gamma$.
If $x_2$ has degree $3$ in $\Gamma_C$, then using the action of $H$ it follows that each of the vertices $y_2$, $z_2$ and $w_2$ has degree $3$ in $\Gamma_C$ which is impossible.

\begin{figure}[h]

\begin{center}
\includegraphics[scale=0.9]{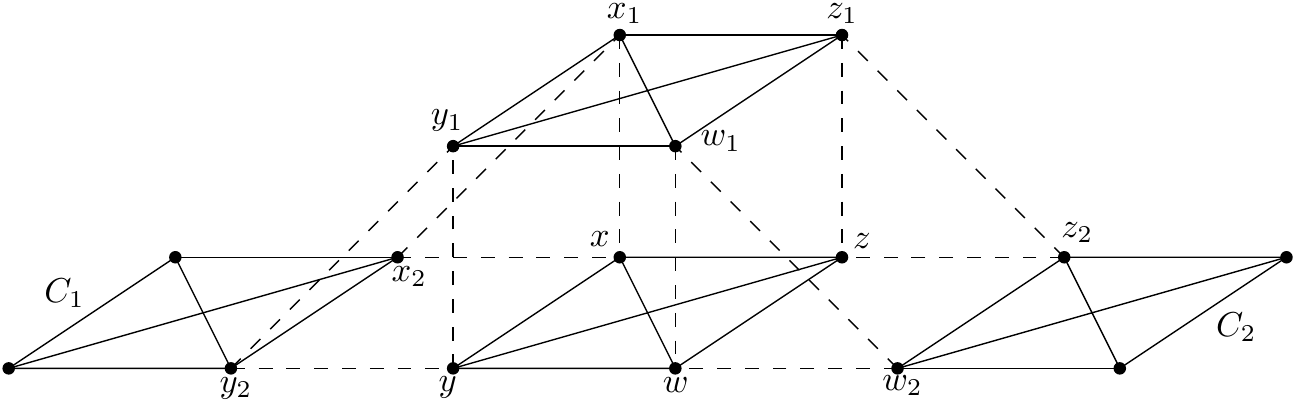} 
\caption{Local structure of $\Gamma$ around $C$. Dashed edges are the edges not contained in a strong clique.}
\label{fig:localL1}
\end{center}
\end{figure}

Finally, suppose that $x_2$ has degree $2$ in $\Gamma_C$. Without loss of generality, we may assume that $x_2\sim y_2$ and $z_2\sim w_2$. Let $C_1$ be the strong clique in $\Gamma$ containing the edge $x_2y_2$ and $C_2$ be the strong clique in $\Gamma$ containing the edge $z_2w_2$. Observe that there are $3$ strong cliques in $\Gamma$ at distance $1$ from $C$ (they are $C_1$, $C_2$ and $\{x_1,y_1,z_1,w_1\}$). 
Since $\Gamma$ is vertex-transitive, and each vertex belongs to unique strong clique, it follows that $\Aut(\Gamma)$ acts transitively on strong cliques, hence all strong cliques have the same properties.
Observe that $C$ is adjacent to $C_1$ and $C_2$ via two disjoint edges $xy$ and $zw$ (see Figure~\ref{fig:localL1}). However, the clique $C_1$ is joined with $C$ and $\{x_1,y_1,z_1,w_1\}$ via single edge $x_2y_2$. This is contradiction, since $Aut(\Gamma)$ acts transitively on strong cliques.
The obtained contradiction shows that for every clique of size $4$ in $\Gamma$, its neighbourhood is disjoint union of two cliques of size $4$. Since $\Gamma$ is connected $5$-valent, it is now easy to see that $\Gamma$ has 12 vertices and it is isomorphic to $K_3\square K_4$.
\end{proof}

The situation when the local graph is $L_1$ is more complicated. Just to demonstrate it, in the following example we give $4$ infinite families of vertex-transitive graphs admitting strong cliques with local graph being isomorphic to $L_1$, hence we leave the characterization of such graphs for future research.

\begin{example}\label{ex:local graph L1}
Each of the graphs $Cay(\ZZ_{4n},\{\pm 1,\pm n, \pm 2n\})$, $Cay(\ZZ_{2n}\times \ZZ_{2},\{(\pm 1,0 ), (n,0),(0,1),(n,1)\})$, $Cay(\ZZ_{n}\times \ZZ_{4},\{(\pm 1,0),(0,1),(0,2),(0,3)\})$, $H_n \square K_2$ with $n\geq 4$ has local graph isomorphic to $L_1$ and admits a strong clique of size $4$.
\end{example}

To summarize our results on $5$-valent vertex-transitive graphs admitting a strong clique, we have the following proposition.

\begin{proposition}\label{prop:5-valent local graph not L1}
Let $\Gamma$ be a connected $5$-valent vertex-transitive graph with local graph not isomorphic to $L_1$. Then $\Gamma$ admits a strong clique if and only if it isomorphic to one of the graphs $K_6$,  $\overline{C_8}$, $C_4[K_2]$, $K_{5,5}$, $K_5\square K_2$, $K_3\square K_4$ or $Cay(\ZZ_{12},\{\pm 1, \pm 4, 6\})$.
\end{proposition}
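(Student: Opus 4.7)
The plan is to prove this as a straightforward synthesis of the results already established in this section, via a case analysis on $\omega(\Gamma)$. The forward direction (restriction) does almost all the work; the reverse direction (realisability) is mostly already recorded in the supporting lemmas and only requires one additional verification.

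First I would dispose of the case $\omega(\Gamma) \neq 4$ by directly applying Proposition~\ref{prop:5 valent omega=2,3,5,6}, which immediately gives $\Gamma \in \{K_{5,5},\ Cay(\ZZ_{12},\{\pm 1, \pm 4, 6\}),\ K_5\square K_2,\ K_6\}$. The substantive case is $\omega(\Gamma) = 4$. Here I would invoke Lemma~\ref{lem:5 valent omega=4}: $\Gamma$ is irreducible, and if the local graph contains a universal vertex, then $\Gamma\cong C_4[K_2]$. So I may assume the local graph has no universal vertex.

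In this remaining situation, because $\omega(\Gamma)=4$ we have $\omega(L)=3$, and by Lemma~\ref{lem:irreducible} no two triangles of $L$ share an edge, so $L$ has a unique $K_3$. Lemma~\ref{lem:local graphs} then restricts $L$ to one of $L_1,\ldots,L_6$. The hypothesis excludes $L_1$; the corollary to the unique-maximum-clique lemma excludes $L_2$ and $L_3$; the dedicated lemma excludes $L_5$. The two surviving possibilities are handled by the corresponding lemmas: $L_4$ yields $\Gamma\cong K_3\square K_4$, and $L_6$ yields $\Gamma\cong \overline{C_8}$. This finishes the forward direction.

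For the converse, each of $K_6$, $C_4[K_2]$, $K_{5,5}$, $K_5\square K_2$, $Cay(\ZZ_{12},\{\pm 1,\pm 4,6\})$ and $K_3\square K_4$ is already shown to admit a strong clique in the lemma or proposition that produced it, so the only remaining verification is for $\overline{C_8}$. This is the one step that is not literally quoted from an earlier statement, but it is immediate from Theorem~\ref{thm:strongclique}: maximal independent sets of $\overline{C_8}$ are precisely the edges of $C_8$ and so all have size $2$, while $\omega(\overline{C_8})=4$, so any maximum clique $C$ satisfies $|C|\cdot|I| = 4\cdot 2 = 8 = |V(\overline{C_8})|$ for every maximal independent set $I$, and is therefore strong. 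There is no real obstacle in this proof; the only point that requires a modicum of care is recording this last realisability check for $\overline{C_8}$ and making sure the case split on whether the local graph has a universal vertex is done before appealing to Lemma~\ref{lem:local graphs}, which presupposes no universal vertex.
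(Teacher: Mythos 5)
Your proposal is correct and follows essentially the same route as the paper, which presents this proposition as a direct synthesis of Proposition~\ref{prop:5 valent omega=2,3,5,6}, Lemma~\ref{lem:5 valent omega=4}, Lemma~\ref{lem:local graphs} and the subsequent lemmas excluding $L_2$, $L_3$, $L_5$ and handling $L_4$ and $L_6$. Your explicit check that $\overline{C_8}$ admits a strong clique (via Theorem~\ref{thm:strongclique}, since $\omega(\overline{C_8})=4$ and every maximal independent set of $\overline{C_8}$ is an edge of $C_8$, of size $2$) is a welcome addition, as this is the one realisability claim the paper leaves implicit.
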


We conclude with this section with the following proposition.
\begin{proposition}\label{prop:valency at most 5}
Let $\Gamma$ be a vertex-transitive graph with valency at most $5$. Then $\Gamma$ admits a strong clique if and only if $\Gamma$ is localizable.
\end{proposition}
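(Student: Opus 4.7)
The backward direction is immediate from the definition of localizability, so I would focus on the forward direction and proceed by case analysis on the valency $d = \text{val}(\Gamma)$. The cases $d \le 2$ are handled by the opening paragraph of Section~\ref{sec:Small valency}: for $d = 0$ each vertex is itself a strong clique; for $d = 1$ the graph is a perfect matching and its edges partition $V(\Gamma)$ into strong cliques; for $d = 2$, Proposition~\ref{prop:trianglefree-regular} together with the triangle analysis there forces $\Gamma \in \{C_3, C_4\}$, both of which are visibly localizable. The cases $d = 3$ and $d = 4$ are direct applications of Theorem~\ref{thm:VT-3valent} and Theorem~\ref{thm:4valent-main} respectively, both of which explicitly list the possibilities and assert localizability.

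The only real work is in the $d = 5$ case, which I would split according to the isomorphism type of the local graph. If the local graph is not isomorphic to $L_1$, then Proposition~\ref{prop:5 valent omega=2,3,5,6} (covering $\omega(\Gamma) \ne 4$), Lemma~\ref{lem:5 valent omega=4} (the reducible subcase and the local-universal-vertex subcase), together with Proposition~\ref{prop:5-valent local graph not L1} (local graphs $L_2,\ldots,L_6$), force $\Gamma$ to lie in the explicit finite list $\{K_6, \overline{C_8}, C_4[K_2], K_{5,5}, K_5 \square K_2, K_3 \square K_4, Cay(\mathbb{Z}_{12},\{\pm 1,\pm 4, 6\})\}$, and each of these graphs is directly checked to be localizable.

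The remaining (and most delicate) subcase is when the local graph is isomorphic to $L_1 = K_3 \sqcup 2K_1$, where Example~\ref{ex:local graph L1} shows that a full classification is not available. Here I would bypass the classification entirely by the following structural observation: since $L_1$ contains a \emph{unique} triangle, each vertex $v$ of $\Gamma$ lies in exactly one $K_4$, namely $\{v\}$ together with the triangle in the local graph at $v$. Hence the collection $\mathcal{K}$ of all $K_4$-cliques of $\Gamma$ partitions $V(\Gamma)$. Because $\Gamma$ is assumed to admit a strong clique, Corollary~\ref{cor:strongclique} together with $\omega(\Gamma) = 4$ implies that some $K_4 \in \mathcal{K}$ is strong. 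Since $\mathrm{Aut}(\Gamma)$ is transitive on vertices, and each vertex determines a unique member of $\mathcal{K}$, the automorphism group acts transitively on $\mathcal{K}$; hence every $K_4 \in \mathcal{K}$ is strong, and $\mathcal{K}$ is the required partition of $V(\Gamma)$ into strong cliques. The main (small) obstacle is simply checking the unique-$K_4$-per-vertex claim carefully from the structure of $L_1$; once that is in hand, vertex-transitivity immediately delivers localizability without having to enumerate the graphs in Example~\ref{ex:local graph L1}.
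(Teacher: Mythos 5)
Your proposal is correct and follows essentially the same route as the paper: a case analysis on the valency citing Theorems~\ref{thm:VT-3valent} and~\ref{thm:4valent-main} and Propositions~\ref{prop:5 valent omega=2,3,5,6} and~\ref{prop:5-valent local graph not L1}, with the local graph $L_1$ case handled by the unique-triangle observation that every vertex lies in a unique $K_4$, so these cliques partition $V(\Gamma)$ and vertex-transitivity makes them all strong once one strong clique exists. Your write-up of the $L_1$ case merely spells out in more detail what the paper states tersely, so there is nothing genuinely different to compare.
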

\begin{proof}
Without loss of generality, we may assume that $\Gamma$ is connected. If $\Gamma$ is of valency 1 or $2$ the result trivially follows.
If $\Gamma$ is $3$-valent, then the result follows by Theorem~\ref{thm:VT-3valent}. When $\Gamma$ is $4$-valent, result follows by Theorem~\ref{thm:4valent-main}. If $\Gamma$ is $5$-valent and $\omega(\Gamma)\neq 4$ then the result follows by Proposition~\ref{prop:5 valent omega=2,3,5,6}.
Proposition~\ref{prop:5-valent local graph not L1} deals with the case when $\Gamma$ is $5$-valent, with clique number $4$ and local graph not isomorphic to $L_1$. 
If $\Gamma$ is vertex-transitive graph with local graph isomorphic to $L_1$, then since $L_1$ has a unique triangle, it follows that the vertex-set of $\Gamma$ can be partitioned into disjoint union of cliques of size $4$ (since each vertex belongs to a unique $4$-clique). Therefore, $\Gamma$ admits a strong clique if and only if it is localizable.
\end{proof}

\section{Some open problems}
\label{sec:problems}

In the study of existence of a strong clique in $5$-valent vertex-transitive graphs, we reduced the problem to the study of vertex-transitive graphs with the prescribed local graph. Then in some cases, we were able to show that there exits no vertex-transitive graph with a given local graph.
This leads to a more general problem of characterizing graphs that can arise as local graphs of vertex-transitive graphs. It is also interesting to investigate what can be said about global properties of the graph by considering local graphs. As an example, it is easy to see that a connected vertex-transitive graph with local graph being asymmetric (that is, with trivial automorphism group) is a GRR (that is, it is a Cayley graph on a group $G$, and its full automorphism group is isomorphic to $G$). Hence we propose the following problem for possible further research.

\begin{problem}
Which graphs can be realised as local graphs of vertex-transitive graphs? In which cases there are only finitely many connected vertex-transitive graphs with a given local graph? 
\end{problem}

In Proposition~\ref{prop:5-valent local graph not L1} we classified all $5$-valent vertex-transitive graphs admitting a strong clique, whose local graph is not isomorphic to $L_1$. We saw in Example~\ref{ex:local graph L1} that there are infinitely many examples of vertex-transitive graphs with local graph isomorphic to $L_1$ admitting a strong clique. Hence we propose the following problem.

\begin{problem}
Classify all $5$-valent vertex-transitive graphs with local graph isomorphic to $L_1$ admitting a strong clique.
\end{problem}

Theorem~\ref{thm:J(7,3,1) is CIS not localizable} shows that there are vertex-transitive graphs with every maximal clique being strong, which are not localizable. However, we saw in Proposition~\ref{prop:valency at most 5} that for vertex-transitive graphs of valency at most $5$, even the existence of a strong clique implies localizablity. Hence it is natural to ask for which other small valencies this might be true.

\begin{problem}
Determine the minimal valency of a vertex-transitive graph admitting a strong clique which is not localizable.
\end{problem}

\section*{Acknowledgments}

This work is supported in part by the Slovenian Research Agency (research program P1-0285 and research projects N1-0032, N1-0038, J1-7051, N1-0062, J1-9110).


\end{document}